\def\@begintheorem#1#2{\par\bgroup{\scshape #1\ #2. }\it\ignorespaces}
\def\@opargbegintheorem#1#2#3{\par\bgroup%
   {\scshape #1\ #2\ ({\upshape #3}). }\it\ignorespaces}
\def\@endtheorem{\egroup}
  \newtheorem{theorem}{Theorem}
  \newtheorem{lemma}[theorem]{Lemma}
  \newtheorem{corollary}[theorem]{Corollary}
  \newtheorem{proposition}[theorem]{Proposition}
  \newtheorem{definition}[theorem]{Definition}
\newtheorem{example}[theorem]{Example}
\newtheorem{remark}[theorem]{Remark}
\newtheorem{homework}[theorem]{Homework}
\newtheorem{case}[theorem]{}
  \newtheorem{theorem}{Theorem}[section]
\newtheorem{remark}[theorem]{Remark}
\begin{document}

\begin{frontmatter}



\title{Ergodicity and invariant measures for a diffusing passive scalar advected by a random channel shear flow and the connection between the Kraichnan-Majda model and Taylor-Aris Dispersion}

\author[1]{Lingyun Ding}
\ead{dingly@live.unc.edu}
\author[1]{Richard M. McLaughlin \corref{mycorrespondingauthor}}
\cortext[mycorrespondingauthor]{Corresponding author}
\ead{rmm@email.unc.edu}
\address[1]{Department of Mathematics, University of North Carolina, Chapel Hill, NC, 27599, United States}





\begin{abstract}
  We study the long time behavior of an advection-diffusion equation with a random shear flow which depends on a stationary Ornstein-Uhlenbeck (OU) process in  parallel-plate channels enforcing the no-flux boundary conditions.  We derive a closed form formula for the long time asymptotics of  the arbitrary $N$-point correlator using the ground state eigenvalue perturbation approach proposed in  \cite{bronski1997scalar}.  In turn, appealing to the conclusion of the Hausdorff moment problem \cite{shohat1943problem}, we discover a diffusion equation with a random drift and deterministic enhanced diffusion possessing the exact same probability distribution function at long times.  Such equations enjoy many ergodic properties which immediately translate to ergodicity results for the original problem.  In particular, we establish that the first two Aris moments using a single realization of the random field can be used to explicitly construct all ensemble averaged moments.  Also, the first two ensemble averaged moments explicitly predict any long time centered Aris moment.  Our formulae quantitatively depict the dependence of the deterministic effective diffusion on the interaction between spatial structure of flow and random temporal fluctuation.
  Further, this approximation provides many identities regarding the stationary OU process dependent time integral. We derive explicit formulae for the decaying passive scalar's long time limiting probability distribution function (PDF) for different types of  initial conditions (e.g. deterministic and random). 

\end{abstract}



\begin{keyword}
Passive scalar \sep Scalar intermittency\sep Shear dispersion \sep  Random shear flow \sep  Turbulent transport \sep Ergodicity
\MSC[2010]{37A25, 37H10, 37N10, 82C70, 76R50}
\end{keyword}
\end{frontmatter}

\section{Introduction}

Passive scalars are extremely important quantities in many physical and biological applications including contamination in groundwater, solute transport in micro-fluidics, and even in the analysis of functional MRI brain scans.  Additionally they help to provide a basis for understanding problems in fluid turbulence.  For example, the $k^{-1}$ small scale power spectrum a scalar field inherits from a turbulent flow, as predicted by Batchelor \cite{bronski1997scalar}, has recently been rigorously established in a passive scalar model with velocities taken from randomly driven Navier-Stokes equations \cite{bedrossian2019batchelor}.  Moreover, they provide insight into intermittency in fluid turbulence whereby higher statistical moments deviate strongly from Gaussianity. 

An important class of problems concerns how a shear flow in a bounded (or partially bounded) cross-sectional domain can increase solute mixing. G. I. Taylor \cite{taylor1953dispersion} first showed that a steady pressure driven flow in a pipe leads to a greatly enhanced effective diffusivity for large  P\'{e}clet numbers. The analysis was later generalized by R. Aris for arbitrary spatial moments\cite{aris1956dispersion}. The dispersion process is sometimes also referred to as the Taylor-Aris dispersion. The recent studies  \cite{vedel2012transient,vedel2014time,ding2020enhanced} explored the case of a periodic time-varying shear flow and developed formulas of effective diffusivity. These studies are focused upon deterministic flows. The case involving random flows has additionally received great attention, particularly in understanding scalar intermittency. 

In a turbulent flow the distributions of most passive scalars such as pressure, temperature, concentration are generally far from Gaussian \cite{majda1999simplified,castaing1989scaling,belmonte1996thermal}. Even for roughly Gaussian velocity fields as observed in turbulent flows, rare fluctuations in amplitude have a significant contribution to non-Gaussianity in a scalar's distribution\cite{monin2013statistical}. Since the detailed structure of turbulence is still poorly understood, attempts to understand intermittency phenomenon have been explored in passive scalar models.  The most popular model used for this purpose is Kraichnan model \cite{kraichnan1968small} and the Majda model \cite{majda1993random,mclaughlin1996explicit,majda1999simplified} of passive scalar advection, where the random velocity field is assumed to be short correlated in time with a coherent (linear) structure in space.  The rapid correlation in time, particularly for multiplicative noise, allows for explicit ensemble moment closure through which explicit closed partial differential equations govern the generic $N$ point correlator (generally in $3N$ spatial dimensions).  Interestingly, Majda demonstrated for the case of a linear shear multiplied by a white in time Gaussian process in free space that these closed PDEs can be explicitly solved, and all moments explicitly computed showing how a heavy tailed scalar PDF is inherited from a Gaussian random field \cite{majda1993explicit,mclaughlin1996explicit,bronski2000rigorous,bronski2000problem}.   

Most prior studies of those models have focussed on the free space domain, fewer studies have addressed the effect of the physical boundary.  One such study contrasted the scalar PDF inherited by an unbounded linear shear with that of a bounded, periodic shear flow \cite{bronski1997scalar}.  This established that for integrable random initial data the PDF would `Gaussianize' at long times, whereas short ranged, random wave initial data would produce divergent flatness factors in the same limit as finite times.  Recently, the role of impermeable boundaries in the Majda model has been explored in a parallel-plate channel with deterministic initial conditions \cite{camassa2019symmetry,camassa2020persisting}.   Those works demonstrate that the sign of long time PDF skewness could be controlled by  P\'{e}clet numbers and the correlation time of the velocity field, in strong contrast with the free space result, where the long time PDF skewnness is strictly positive. \cite{mclaughlin1996explicit}. 
  
In this paper, we study a novel connection between ensemble (scalar intermittency) and spatial averaging (shear dispersion), arriving at a link between the Kraichnan-Majda model and Taylor-Aris Dispersion.  This link will yield a complete theory for the long time invariant measure of a diffusing passive scalar advected by an OU process dependent random shear flow.  Further, we will establish for the first time ergodicity results connecting certain spatial averages of the random passive scalar to the ensemble averages of the random passive scalar.  These ergodicity results are possible through an approximating effective advection-diffusion equation with a random drift we identified which possesses the exact same long time moments as the full problem.  Such random advection-diffusion equations were studied in \cite{bronski2007explicit}.  Strong ergodicity in which time averages of fields depending upon a single realization of the random process converge to the ensemble average are highly desirable, and provide connection between real experiments and theories developed using ensemble averaging.  Our results here will establish how the commonly measured effective diffusivity in mixing experiments is in fact ergodic in this model, converging at long times to a deterministic value related to certain ensemble averages of the full problem.  Such results are important 
in justifying the utility of studying the ensemble:  an experimentalist only needs to perform an experiment with one single realization of the random flow for the ensemble averages to make physically relevant predictions.  
The tools for these results are based on the conclusion of the Hausdorff moment problem \cite{shohat1943problem}, the ground state eigenvalue perturbation approach proposed in  \cite{bronski1997scalar}, the availability of closed moment equations for flows involving OU processes \cite{resnick1996dynamical}, and the available exact PDF for white wind models \cite{bronski2007explicit}, here extended to OU wind models.  Figure \ref{fig:Overview} gives a schematic diagramming the theoretical approach we take connecting the effective equations to the original problem through Hausdorff, and the results which ultimately follow from this connection.

\begin{figure}
  \centering
    \includegraphics[width=1\linewidth]{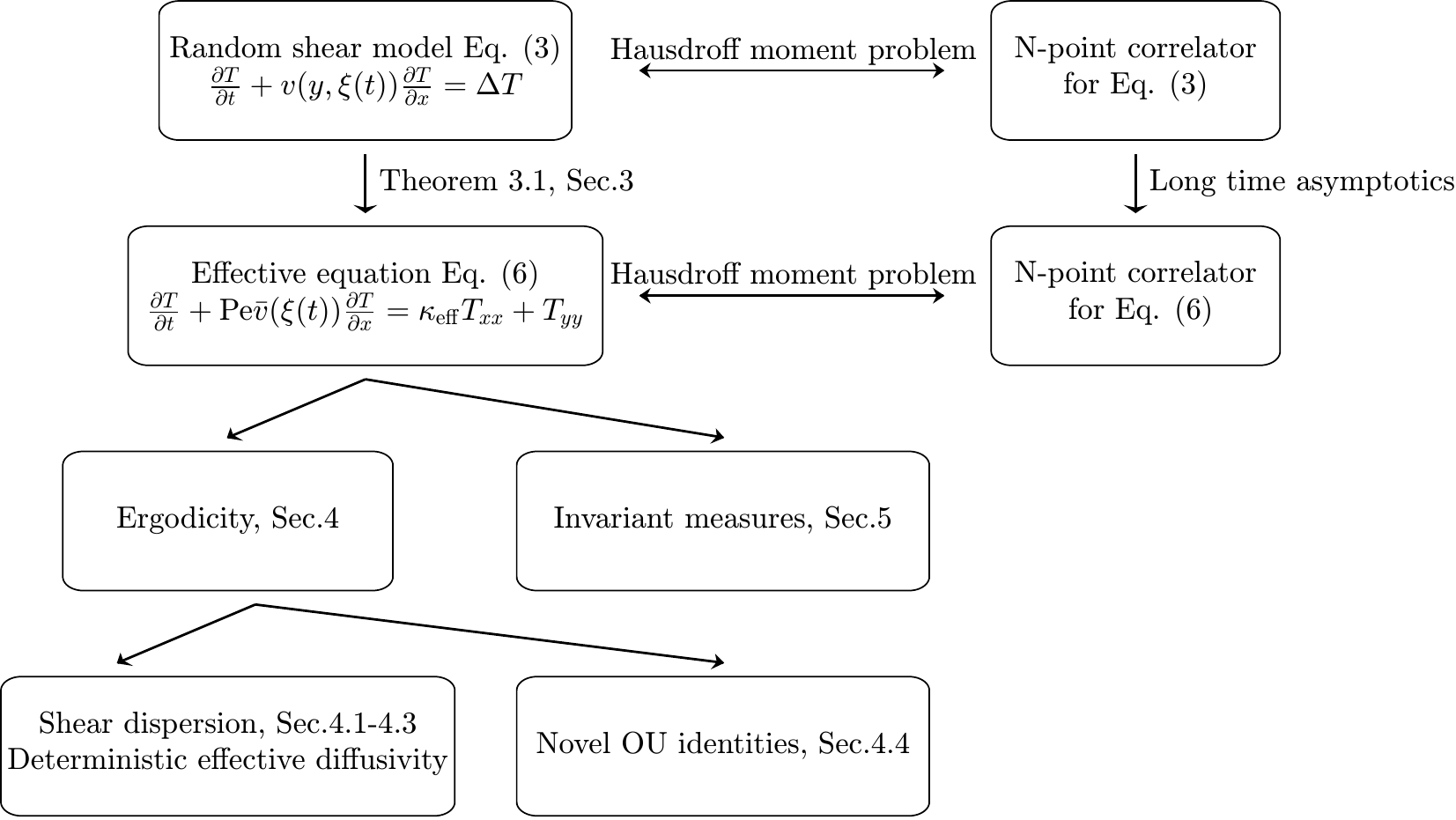}
  \caption{Main results and the structure of their derivation. }
  \label{fig:Overview}
\end{figure}

The paper is organized as follows, as well as summarized in figure \ref{fig:Overview}: In section \ref{sec:setup}, we formulate the evolution of the passive scalar field advected by a random shear flow which depends on a stationary OU process in a parallel-plate channel, which is a generalization of Majda model. Then we review the some important conclusions of the associated moment closure problem. In section \ref{sec:MainResult}, we derive the effective advection-diffusion equation, which is the key result of this paper. In section \ref{sec:ShearDispersion}, we show the link between the Kraichnan-Majda model and Taylor-Aris Dispersion. In section \ref{sec:scalarIntermittency}, we derive the long time invariant measure for three classes of initial data:1) deterministic initial data, 2) square integrable Gaussian random initial data, 3) wave function data with a Gaussian random amplitude. In section \ref{sec:discuss}, we summarize the conclusions from the findings in the paper and briefly discuss future studies.


\section{Setup and background of the Majda Model}
\label{sec:setup}
In this paper, we will study the following random advection diffusion equation with initial condition $T_{0}\left(x,y\right)$ and impermeable channel
boundary conditions,
\begin{equation}\label{eq:advectionDiffusion}
\frac{\partial T}{\partial t}+v(y,\xi(t))\frac{\partial T}{\partial x}=\kappa \Delta T \,,\qquad
 T(x,y,0)=T_{0}(x,y)\,, \qquad
\displaystyle\left. \frac{\partial T}{\partial y}\right|_{y= 0,L}=0 \, , 
\end{equation}
where the domain is $\left\{ (x,y)| x\in \mathbb{R}, y \in  [0, L] \right\}$, $L$ is the gap thickness of the channel, $\kappa$ is the diffusivity, $\xi(t)$ is a  zero-mean, Gaussian random process with the correlation function given by $\left\langle \xi(t)\xi(s) \right\rangle=R(t,s)$. A special case of flow $v(y,\xi(t))$ is the multiplicatively separable function $v (y,\xi(t))=u(y)\xi(t)$. This type of flow can  originate from either a time varying pressure field, or by randomly moving portions of the boundary, in a high viscosity fluid, see section 2 of \cite{camassa2019symmetry} for more details. Two types of $\xi (t)$ received great attention in the literature:  1) $\xi(t)$ is a Gaussian white noise in time so that $R(t,s)=g^{2} \delta(t-s)$, or 2) $\xi(t)$ is a stationary Ornstein-Uhlenbeck process with damping $\gamma$ and dispersion $\sigma$, which is the solution of stochastic differential equation (SDE) $\mathrm{d}\xi (t) =-\gamma \xi (t)\mathrm{d}t +\sigma \mathrm{d}B (t)$ with initial condition $\xi(0) \sim \mathcal{N} (0, {\sigma^2}/{2 \gamma})$. Here $B (t)$ is the standard Brownian motion and $\mathcal{N} (a,b)$ is the normal distribution with mean $a$ and variance $b$.   The correlation function of $\xi (t)$ is $R(t,s)=\frac{\sigma^{2}}{2\gamma}e^{-\gamma \left| t-s \right|}$. $\gamma^{-1}$ is often referred to as the correlation time of the OU process. It is easy to check that the stationary Ornstein-Uhlenbeck process converges to the Gaussian white noise process as the correlation time vanishes with fixed ${\sigma}/{\gamma}$. Due to this property, we will focus on the OU process and consider the white noice process as a limiting case in this paper. 

Notice that $\gamma \sim \text{Time}^{-1}$, $\sigma \sim \text{Time}^{ -\frac{3}{2}}$. With the change of variables, 
\begin{equation}
\begin{array}{ccc}
Lx'= x & Ly'=y&\frac{L^2}{\kappa}t'=t \\ 
g=\frac{\sigma}{\gamma}&  \frac{\kappa}{L^2}\gamma'=\gamma  &U= Lg^{2}  \\
Uv'(y,\xi' (t))=v (y, \xi (t)) &\frac{g \sqrt{\kappa}}{L}\xi'(\frac{L^2}{\kappa}t')=\xi (t)  &T' (Lx',Ly',\frac{L^2}{\kappa}t')=T (x,y,t), \\
\end{array}
\end{equation}
we can drop the primes without confusion and obtain the nondimensionalized version of \eqref{eq:advectionDiffusion}:
\begin{equation}\label{eq:advectionDiffusionNonDimension}
\displaystyle \frac{\partial T}{\partial t}+\text{Pe} v(y,\xi(t))\frac{\partial T}{\partial x}=\Delta T\,,\qquad
\displaystyle T(x,y,0)=T_{0}(x,y)\,,\qquad
\displaystyle \left.\frac{\partial T}{\partial y}\right|_{y= 0,1}=0\,,
\end{equation}
where the domain is $\left\{ (x,y)| x\in \mathbb{R}, y \in  [0,1] \right\}$, and we have introduced the P\'{e}clet number  $\text{Pe}=  {U L}/{ \kappa}=  {L^{2}g^{2}}/\kappa$. When $\xi (t)$ is the white noise process, the correlation function of $\xi (t)$ is $R(t,s)= \delta (t-s)$.  Conversely, when $\xi (t)$ is the stationary Ornstein-Uhlenbeck process, the underlying SDE becomes $\mathrm{d}\xi (t) =-\gamma \xi (t)\mathrm{d}t + \mathrm{d}B (t)$ with the initial condition $\xi (0) \sim \mathcal{N} (0,\frac{\gamma}{2})$, and the correlation function of $\xi (t)$ is $R(t,s)= \frac{\gamma}{2}e^{-\gamma \left| t-s \right|}$.

Define the $N$-point correlation function $\mathbf{\Psi}_{N}$ of the scalar field $T(x,y,t)$: $\mathbb{R}^{N}\times \mathbb{R}^{N}\times \mathbb{R}^{+}\rightarrow \mathbb{R}$ by $\mathbf{\Psi}_N (\mathbf{x}, \mathbf{y},t) =\left<\prod_{j=1}^N T(x_j,y_j,t)\right >_{\xi (t)}$, where $\mathbf{x}=\left(x_1,x_2,\cdots,x_N\right)$, $\mathbf{y}=(y_1,y_2,\cdots,y_N)$. Here, the brackets $\left\langle \cdot  \right\rangle_{\xi(t)}$ denote ensemble averaging with respect to the stochastic process $\xi(t)$.  The $\mathbf{\Psi}_{N}$ associated with the free space version of \eqref{eq:advectionDiffusion} is known for some special flows. When $ v(y,\xi(t))=u (y)\xi (t)$ and $\xi(t)$ is  the Gaussian white noise process, Majda \cite{majda1993random} showed that $\hat{\mathbf{\Psi}}_{N}$ satisfies a $N$-body parabolic quantum mechanics problem,
\begin{equation}\label{closureeqnWhite}
\begin{aligned}
\frac{\partial \hat{\mathbf{\Psi}}_{N}}{\partial t} &= \Delta_N \hat{\mathbf{\Psi}}_{N}- \left( \frac{\text{Pe}^2}{2} \left( \sum\limits_{j=1}^{N}u\left(y_{j}\right) k_{j}\right)^2+ |\mathbf{k}|^2  \right)\hat{\mathbf{\Psi}}_{N}\\
 \hat{\mathbf{\Psi}}_{N}(\mathbf{k},\mathbf{y},0)&= \prod_{j=1}^N \hat{T}_{0}(k_j,y_j)
\end{aligned}
\end{equation}
where $\hat{f}(\mathbf{k})= \int\limits_{\mathbb{R}^N}^{} \mathrm{d}\mathbf{y} e^{\mathrm{i}(\mathbf{x}\cdot \mathbf{k} )}f(\mathbf{x})$ is the Fourier transformation of $f(\mathbf{x})$, $\Delta_N$ is the Laplacian operator in $N$ dimensions $\Delta_{N}=\sum\limits_{j=1}^N \frac{\partial^2}{\partial y_j^{2}}$,  $\mathbf{k}=\left(k_1,k_2,\cdots,k_N\right)$. When $u (y)=y$, Majda \cite{majda1993random}  derived the exact expression of $\Psi_N$.  A rotation of coordinates reduces the $N$-dimensional problem to a one-dimensional problem. Then the solution of \eqref{closureeqnWhite} is available via Mehler's formula.
Based on this exact $N$-point correlation function, the distribution of the scalar field advected by a linear shear flow has been studied for deterministic and random initial data. The non-Gaussian behaviors of PDF have been reported in  \cite{mclaughlin1996explicit,bronski2000problem,bronski2000rigorous}.

When $\xi(t)$ is the stationary Ornstein-Uhlenbeck process, by introducing an extra variable $z$, Resnick \cite{resnick1996dynamical} showed  that $\hat{\mathbf{\Psi}}_{N} (\mathbf{k}, \mathbf{y},t)= \frac{1}{\sqrt{\pi}}\int\limits_{-\infty}^{+\infty} \hat{\psi} (\mathbf{k}, \mathbf{y},z,t) e^{-z^2} \mathrm{d} z $, where $\hat{\psi} (\mathbf{k}, \mathbf{y},z,t) $ satisfies the following partial differential equation:
\begin{equation}\label{closureeqnOU}
\begin{aligned}
\frac{\partial \hat{\psi}}{\partial t}+\mathrm{i} \text{Pe}\sum\limits_{j=1}^N   v(y_{j}, \sqrt{\gamma} z)  k_{j}  \hat{\psi}+ \gamma z \hat{\psi}_z &= \Delta_N \hat{\psi}- |\mathbf{k}|^{2}\hat{\psi}+ \frac{\gamma}{2}\hat{\psi}_{zz}\\
 \hat{\psi}(\mathbf{k},\mathbf{y},z,0)&=\prod_{j=1}^N \hat{T}_{0}(k_j,y_j)
\end{aligned}
\end{equation}
When $u (y)=y$, Resnick \cite{resnick1996dynamical} derived the exact expression for $\Psi_{N}$ via the similar strategy Majda used for solving \eqref{closureeqnWhite} and showed it converges to the solution of \eqref{closureeqnWhite} in the limit  $\gamma\rightarrow \infty$.

These results are all derived in free-space.   The analytic formula of the $N$-point correlation function $\Psi_{N}$ for the boundary value problem \eqref{eq:advectionDiffusionNonDimension} is unknown even for simple-geometry domains.  For periodic boundary conditions, Bronski and McLaughlin \cite{bronski1997scalar} carried out a second order perturbation expansion for the ground state of periodic Schr\"odinger equations to analyze the inherited probability measure for a passive scalar field advected by periodic shear flows with multiplicative white noise.
In  \cite{camassa2019symmetry,camassa2020persisting}, equation \eqref{eq:advectionDiffusionNonDimension} was studied with a stationary OU process, where a dramatically different long time state resulting from the existence of the impermeable boundaries was found. In particular, the PDF of the scalar in the channel case has negative skewness for sufficiently small P\'{e}clet number,  in stark contrast to free space, where the limiting skewness is strictly positive for all P\'{e}clet number. Inspired by the observation, we further explore here the PDF of the advected scalar in the presence of impermeable boundaries by the perturbation method introduced in \cite{bronski1997scalar}.  Briefly, the long time behavior of the Fourier transformation of $N$-point correlation function $\hat{\Psi}_{N}$ of the scalar field is dominated by the neighborhood of the zero frequency  $\mathbf{k}=\mathbf{0}$. This observation reduces the series expansion of $\hat{\Psi}_{N}$ to a single multi-dimensional Laplace type integral. Then, the standard asymptotic analysis and inverse Fourier transformation yield the long time asymptotic expansion of $\Psi_N$.

\section{Effective Equation at long times}
\label{sec:MainResult}
We begin by stating the key result of the paper as a theorem. In the following context, we use $\bar{a}$ to denote the cross sectional average of function $a (y)$, $\bar{a}=\int\limits_0^1 a(y)\mathrm{d} y$.
\begin{theorem}\label{thm:WindModelApproximation}
Assume $v (y, \sqrt{\gamma}z)$ has the Hermite polynomial series representations $v (y,\sqrt{\gamma}z)= \sum\limits_{n=0}^{\infty}a_n (y,\sqrt{\gamma}) H_n (z)$, where $\bar{a}_{0}=0$ and $H_n (z)$ is the $n$-th Hermite polynomial which is the orthogonal polynomial with respect to the weight function $e^{-z^2}$.  The solution of equation \eqref{eq:advectionDiffusionNonDimension}can be approximated by the solution of the following equation (wind model) at long times:
\begin{equation}\label{eq:WindModel}
\begin{aligned}
 \frac{\partial T}{\partial t}+  \mathrm{Pe}\bar{v}(\xi(t))\frac{\partial T}{\partial x}=\kappa_{\mathrm{eff}} T_{xx}+  T_{yy}, \quad 
 T(x,y, 0)= T_0(x,y),\quad \left. \frac{\partial T}{\partial y} \right|_{y= 0,1}=0\\
\end{aligned}
\end{equation}
 where  $\bar{v} (z)= \int\limits_{0}^{1} v(y,z)\mathrm{d} y $, $\kappa_{\mathrm{eff}}= \frac{\lambda^{(2)}-\lambda^{(1,1)}}{2}$ and 
\begin{equation}
\begin{aligned}
&\lambda_0^{(2)}= 2+2\mathrm{Pe}^{2}\sum\limits_{n=0}^{\infty} n! 2^{n}\int\limits_{0}^{1}a_n (y)\left( n\gamma-\Delta \right)^{-1}a_{n} (y)\mathrm{d} y \\
&\lambda_0^{(1,1)}=\frac{2\mathrm{Pe}^{2}}{\gamma}\sum\limits_{n=1}^{\infty}  (n-1)! 2^{n}\bar{a}_n^{2} 
=\frac{4\mathrm{Pe}^{2} }{\gamma} \int\limits_{-\infty}^{\infty} e^{z^2} \left( \int\limits_{-\infty}^z e^{-z^2}\bar{v} (z_{1})\mathrm{d}z_1 \right)^{2}\mathrm{d}z\\
&\left( \lambda-\Delta \right)^{-1}a (y) = \frac{1}{\sqrt{\lambda }}\left( \frac{\cosh \left(\sqrt{\lambda } y\right) \int_0^1 a(s) \cosh \left(\sqrt{\lambda } (1-s)\right) \mathrm{d} s}{\sinh \left(\sqrt{\lambda } \right)}\right.\\
&\hspace{1cm} \left. -\int_0^y a(s) \sinh \left(\sqrt{\lambda } (y-s)\right) \mathrm{d} s \right) \\
&\Delta^{-1}a (y) =-\int\limits_{0}^y \int\limits_{0}^{y_1}a(y_{2})\mathrm{d}y_2\mathrm{d}y_1,\; \text{if}\;  \bar{a}=0\\
\end{aligned}
\end{equation}

\end{theorem}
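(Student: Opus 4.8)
The plan is to reduce the long-time analysis of the $N$-point correlator to a ground-state eigenvalue perturbation for the operator appearing in Resnick's representation \eqref{closureeqnOU}, following the strategy of \cite{bronski1997scalar}. Writing $\partial_t \hat\psi = -\mathcal{L}(\mathbf{k})\hat\psi$ with
\[
\mathcal{L}(\mathbf{k}) = -\Delta_N - \tfrac{\gamma}{2}\partial_z^2 + \gamma z\,\partial_z + |\mathbf{k}|^2 + \mathrm{i}\,\mathrm{Pe}\sum_{j=1}^N v(y_j,\sqrt{\gamma}z)\,k_j,
\]
the long-time decay is controlled by the eigenvalue $\lambda_0(\mathbf{k})$ of smallest real part, so that $\hat{\mathbf{\Psi}}_N(\mathbf{k},\mathbf{y},t)\sim C(\mathbf{k})\,e^{-\lambda_0(\mathbf{k})t}$. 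At $\mathbf{k}=\mathbf{0}$ the operator decouples into the Neumann Laplacian in $\mathbf{y}$ (constant ground state, eigenvalue $0$) and the Ornstein-Uhlenbeck generator in $z$, whose eigenfunctions are the Hermite polynomials $H_n(z)$ with eigenvalues $n\gamma$; hence the unperturbed ground state is the constant and $\lambda_0(\mathbf{0})=0$.

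First I would carry out Rayleigh-Schr\"odinger perturbation theory in $\mathbf{k}$ about this constant ground state, treating $|\mathbf{k}|^2$ and the imaginary advection term as the perturbation. The first-order shift is proportional to the full average of $v$, which vanishes by the hypothesis $\bar a_0=0$, so the drift of the mean is removed and the leading nontrivial contribution is quadratic in $\mathbf{k}$. I expect the second-order shift to split naturally into diagonal ($k_j^2$) and off-diagonal ($k_i k_j$, $i\neq j$) pieces. Expanding $v=\sum_n a_n(y)H_n(z)$ diagonalizes the $z$-dependence: each intermediate Hermite mode $n$ carries resolvent weight $(n\gamma-\Delta)^{-1}$ in $y$ and normalization $\langle H_n,H_n\rangle = \sqrt{\pi}\,n!\,2^n$. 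For the diagonal term the full $y$-resolvent acts on $a_n(y)$, producing $\lambda_0^{(2)}$; for the off-diagonal term the two distinct tracer points share only the $z$-variable while their $y$-coordinates are independent, so only the cross-sectional average $\bar a_n$ survives and the resolvent collapses to $1/(n\gamma)$, producing $\lambda_0^{(1,1)}$. The explicit $\cosh/\sinh$ formula for $(\lambda-\Delta)^{-1}$ follows from solving the Neumann Green's-function problem $(\lambda-\partial_y^2)G=a$, and the $\Delta^{-1}$ formula is its $\lambda\to0$ limit on mean-zero data.

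With the quadratic form $\lambda_0(\mathbf{k})\approx \tfrac12\sum_j\lambda_0^{(2)}k_j^2 + \sum_{i<j}\lambda_0^{(1,1)}k_ik_j$ in hand, I would next compute the corresponding ground-state expansion for the wind model \eqref{eq:WindModel}. Its random drift $\mathrm{Pe}\,\bar v(\xi(t))$ reproduces exactly the cross-point correlation encoded in $\lambda_0^{(1,1)}$ (this is where the second, integral representation of $\lambda_0^{(1,1)}$ against the OU stationary measure enters, via the integrated autocorrelation of $\bar v(\xi)$), while the constant-coefficient enhanced diffusivity $\kappa_{\mathrm{eff}}$ supplies the remaining diagonal weight. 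Matching the two quadratic forms term by term forces $\kappa_{\mathrm{eff}} = (\lambda_0^{(2)}-\lambda_0^{(1,1)})/2$, the $-\lambda_0^{(1,1)}$ correction subtracting off precisely the part of the single-point spreading already accounted for by fluctuations of the random drift. Finally, since in the diffusive scaling $\mathbf{k}\sim t^{-1/2}$ only the quadratic term of each ground-state eigenvalue survives at leading order, the full problem and the wind model share all long-time $N$-point correlators, hence all spatial moments; appealing to the determinacy conclusion of the Hausdorff moment problem \cite{shohat1943problem} then upgrades agreement of moments to agreement of the limiting distributions.

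The main obstacle I anticipate is the second-order perturbation step: because $\mathcal{L}(\mathbf{k})$ is non-self-adjoint (advection enters through $\mathrm{i}\,\mathrm{Pe}\sum_j v\,k_j$), I must work simultaneously with the ground state and its adjoint, and I must resum the infinite tower of intermediate states over the joint Hermite ($z$) and Laplacian ($y$) spectra into the closed-form resolvent expressions. Care is also needed to justify that the non-ground-state branches and the higher-order terms in the eigenvalue expansion are genuinely subdominant in the long-time limit, so that the single-eigenvalue Laplace-type reduction is legitimate and the moment matching is exact at leading order.
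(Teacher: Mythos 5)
Your proposal is correct and follows essentially the same route as the paper: reduce to matching $N$-point correlators via the Hausdorff moment theorem, compute the ground-state eigenvalue expansion of the Resnick closure \eqref{closureeqnOU} about $\mathbf{k}=\mathbf{0}$ (with the permutation-symmetric Hessian split into the diagonal $\lambda^{(2)}$ and off-diagonal $\lambda^{(1,1)}$ pieces, Hermite modes carrying resolvent weight $(n\gamma-\Delta)^{-1}$), repeat for the wind model, and match the quadratic forms to force $\kappa_{\mathrm{eff}}=\bigl(\lambda^{(2)}-\lambda^{(1,1)}\bigr)/2$. The obstacles you flag (non-self-adjointness handled via the Fredholm alternative, localization of the Laplace-type integral) are exactly the points the paper addresses in its appendix.
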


\begin{proof}
To show two random field are same at long times, we need to prove they have the same $N$-point joint distribution at long times. Due to the maximum principle of heat equation, the solution is bounded by the maximum value of initial condition. The Hausdorff moment problem \cite{shohat1943problem} concludes that, for the random variable supported on a closed interval, the sequence of moments are uniquely determine the distribution. Hence, we only need to show that the solution of equation \eqref{eq:advectionDiffusionNonDimension} and equation \eqref{eq:WindModel} have the same $N$-point correlation function at long times.  

We derive the long time asymptotic expansion of $N$-point correlation function of equation \eqref{eq:advectionDiffusionNonDimension} with exponential decay correction with the ground state energy expansion strategy described in \cite{bronski1997scalar,camassa2020persisting}. 
\begin{equation}\label{eq:eigenfunctionExpansion0th}
\begin{aligned}
\mathbf{\Psi}_{N}(\mathbf{x}, \mathbf{y},t)= \frac{1}{(2\pi)^{N}} \int\limits_{\mathbb{R}^{N}}^{}  e^{-\mathrm{i}(\mathbf{x}\cdot \mathbf{k})}\beta_0(\mathbf{k}) \phi_0 (\mathbf{k},\mathbf{y})
          e^{-\lambda_0 (\mathbf{k}) t}\mathrm{d}\mathbf{k} +\mathcal{O} (e^{-\pi^{2}t}) \text{ as } t\rightarrow \infty
\end{aligned}
\end{equation}
where $\lambda_{0} (\mathbf{k}), \phi_0 (\mathbf{k},\mathbf{y})$ are the first eigenvalues and eigenfunctions ($l=0$) of the eigenvalue problem:
\begin{equation}\label{eq:eigenvalueProblemOU}
\begin{aligned}
-(\lambda_{l}-\left| \mathbf{k} \right|^{2}) \varphi_{l}&=-\mathrm{i} \mathrm{Pe} \sum\limits_{j=1}^N k_{i} v(y_{j}, \sqrt{\gamma} z)  \varphi_{l}- \gamma z \frac{\partial \varphi_{l}}{\partial z}+ \frac{\gamma}{2} \frac{\partial^2 \varphi_l}{\partial z^{2}}  + \Delta_N \varphi_{l}\\
\frac{\partial \varphi_{l}}{\partial y_{j}}|_{y_{j}= 0, 1}&=0 \quad \forall  1\leq j\leq N
\end{aligned}
\end{equation}
Here, we choose $\varphi_{l}$ such that $\left\{ \varphi_{l} \right\}_{l=0}^{\infty}$ form an orthonormal basis with respect to the inner product $ \left\langle f(\mathbf{y},z),g(\mathbf{y},z) \right\rangle=\frac{1}{\sqrt{\pi}} \int\limits_{-\infty}^{+\infty}\mathrm{d}z \int\limits_{[0,1]^{N}}^{} f(\mathbf{y},z)g^{*}(\mathbf{y},z)e^{-z^{2}}\mathrm{d} \mathbf{y}$ respectively, where $g^{*}$ is the complex conjugate of $g$. With this definition of inner product, we have  $\beta_l(\mathbf{k})= \left\langle \prod_{j=1}^N \hat{T}_{0}(k_j,y_j),  \phi_l(\mathbf{k},\mathbf{y}) \right\rangle  $.

Equation \eqref{eq:eigenfunctionExpansion0th} is a $N$ dimensional Laplace type integral with respect to the frequency variable $\mathbf{k}$. It is well known that, for $t$ sufficiently large, the integral becomes localized near the minimum of $\lambda(\mathbf{k})$\cite{inglot2014simple,kirwin2010higher}. Applying the regular perturbation theory on the eigenvalue problem \eqref{eq:eigenvalueProblemOU} yields that $\lambda_0 (\mathbf{0})=0, \phi_{0}(\mathbf{0},\mathbf{y})=1$ ( see details in appendix \ref{sec:PerturbationImpermeable}). Hence, we have the approximation of  \eqref{eq:eigenfunctionExpansion0th} as $t\rightarrow \infty$:
\begin{equation}\label{eq:NpointCorrelationShear}
\begin{aligned}
\mathbf{\Psi}_{N}(\mathbf{x}, \mathbf{y},t)&= \frac{1}{(2\pi)^{N}} \int\limits_{\mathbb{R}^{N}}^{}  e^{-\mathrm{i}(\mathbf{x}\cdot \mathbf{k})}\beta_0(\mathbf{k}) \phi_0 (\mathbf{0},\mathbf{y})
          e^{- \frac{1}{2}\mathbf{k} \mathbf{\Lambda}_{1}\mathbf{k}^{\mathrm{T}} t}\mathrm{d}\mathbf{k} +\mathcal{O} (t^{-\frac{N+2}{2}} )  \\
&=  \frac{1}{(2\pi)^{N}} \int\limits_{\mathbb{R}^{N}}^{} \left(\int\limits_{[0,1]^N}^{}\hat{\mathbf{\Psi}}_{N}(\mathbf{k},\mathbf{y},0) \mathrm{d} \mathbf{y} \right) 
e^{-\mathrm{i}(\mathbf{x}\cdot \mathbf{k})- \frac{1}{2}\mathbf{k} \mathbf{\Lambda}_{1}\mathbf{k}^{\mathrm{T}} t}\mathrm{d}\mathbf{k} +\mathcal{O} (t^{-\frac{N+2}{2}} ) \\
&=  \frac{\exp \left( - \frac{1}{2t}\mathbf{x} \mathbf{\Lambda}_{1}^{-1}\mathbf{x}^{\mathrm{T}} \right) }{(2\pi)^{\frac{N}{2}} \sqrt{ \det (\Lambda_{1})}}\int\limits_{[0,1]^N}^{}\hat{\mathbf{\Psi}}_{N}(\mathbf{0},\mathbf{y},0) \mathrm{d} \mathbf{y}  
           +\mathcal{O} (t^{-\frac{N+2}{2}} ) \\
\end{aligned}
\end{equation}
where $\left( \mathbf{\Lambda}_{1} \right)_{i,j}= \frac{\partial^2 }{\partial k_i \partial k_j} \lambda_0(\mathbf{x},\mathbf{k})|_{\mathbf{k}=\mathbf{0}}$ is the Hessian matrix of the eigenvalue $\lambda_0 (\mathbf{k})$ at $\mathbf{k}=\mathbf{0}$. Since the eigenvalue problem  \eqref{eq:eigenvalueProblemOU} are invariant under the permutation of frequency variables, $\mathbf{\Lambda}_{1}$  only depends on the derivative of eigenvalue in one-dimensional eigenvalue problem $\lambda^{(2)}=\frac{\partial^{2}}{\partial k_1^{2}}\lambda_0 (k_{1})|_{k_1=0} $ and the derivative of eigenvalue in two-dimensional eigenvalue problem $\lambda^{(1,1)}=\frac{\partial^{2}}{\partial k_1\partial k_2}\lambda_0 (k_{1},k_2)|_{k_1=0,k_2=0} $.
Therefore we have that $\mathbf{\Lambda}_{1}= \left( \lambda^{(2)} -\lambda^{(1,1)} \right)\mathbf{I}+\lambda^{(1,1)}\mathbf{e}^{\text{T}}\mathbf{e}$, where $\mathbf{I}$ is the identity matrix of size $N\times N$ and $\mathbf{e}$ is a $1\times N$ vector with $1$ in all coordinates. The explicit formula of  $\lambda^{(2)},\lambda^{(1,1)}$ can be obtained by the  perturbation method introduced in the appendix of \cite{bronski1997scalar}. Appendix \ref{sec:PerturbationImpermeable} shows the details of the calculation. We should remark that series formula of the $\lambda^{(2)},\lambda^{(1,1)}$ we presented here may not be optimally convergent. One can choose different basis to solve the recursive system based on the form of $v(y,z)$.

The same strategy leads to the $N$-point correlation of the solution of the  equation \eqref{eq:WindModel} (also see section 4 of \cite{camassa2020persisting} for details),
\begin{equation}\label{eq:NpointCorrelationWind}
\begin{aligned}
\mathbf{\Psi}_{N}(\mathbf{x}, \mathbf{y},t)&=  \frac{1}{(2\pi)^{N}} \int\limits_{\mathbb{R}^{N}}^{} \left(\int\limits_{[0,1]^N}^{}\hat{\mathbf{\Psi}}_{N}(\mathbf{k},\mathbf{y},0) \mathrm{d} \mathbf{y} \right)  e^{-\mathrm{i}(\mathbf{x}\cdot \mathbf{k})} 
          e^{- \mathbf{k} \mathbf{\Lambda}_2\mathbf{k}^{\mathrm{T}} t} \mathrm{d} \mathbf{k}\\
\end{aligned}
\end{equation}
where $\mathbf{\Lambda}_{2}= 2\kappa_{\mathrm{eff}}\mathbf{I}+\lambda^{(1,1)}\mathbf{e}^{\text{T}}\mathbf{e}$. We can see that the $N$-point correlation function \eqref{eq:NpointCorrelationShear} and \eqref{eq:NpointCorrelationWind} are the same when $\kappa_{\mathrm{eff}}= \frac{\lambda^{(2)}-\lambda^{(1,1)}}{2}$. Therefore, the solutions of equation \eqref{eq:advectionDiffusionNonDimension} and equation \eqref{eq:WindModel} have the same $N$-point correlation function at long times. This completes the proof.

\end{proof}

\begin{remark}
Theorem \ref{thm:WindModelApproximation} holds for the periodic boundary condition  except with a different definition of operator $\left( \lambda-\Delta \right)^{-1}$.
\begin{equation}
\begin{aligned}
\left( \lambda-\Delta \right)^{-1} a (y)= &\frac{ \sinh \left(\sqrt{\lambda } \left(y-\frac{1}{2}\right)\right)\int_0^1 a(s) \sinh \left(\sqrt{\lambda } (L-s)\right) \, ds}{2 \sqrt{\lambda } \text{sinh}\left(\frac{\sqrt{\lambda }}{2}\right) }\\
&+\frac{\cosh \left(\sqrt{\lambda } \left(y-\frac{1}{2}\right)\right) \int_0^1 a(s) \cosh \left(\sqrt{\lambda } (1-s)\right) \, ds}{2 \sqrt{\lambda } \text{sinh}\left(\frac{\sqrt{\lambda }}{2}\right) }\\
& -\frac{\int_0^y a(s) \sinh \left(\sqrt{\lambda } (y-s)\right) \, ds}{\sqrt{\lambda }} \\
\Delta^{-1}a (y) =&- \int\limits_{0}^y \int\limits_{0}^{y_1} a (y_2)\mathrm{d}y_{2}\mathrm{d} y_1 + y\int\limits_{0}^{1}a (y_2)\mathrm{d}y_2+ \int\limits_{0}^{1} \int\limits_{0}^{y_1} a (y_2)\mathrm{d}y_{2}\mathrm{d} y_1 - \int\limits_{0}^{1}a (y_2)\mathrm{d}y_2\\
\end{aligned}
\end{equation}
Appendix \ref{sec:PerturbationPeriodic} shows the details of the calculation. 

\end{remark}

\begin{remark}
The condition $\bar{a}_{0} =0$ is introduced for the convenience of analysis. For functions $a_{0} (y)$ which do not satisfy this condition, one can apply the Galilean transformation $\tilde{x}=x- t\int\limits_{0}^{1}a_{0} (y)\mathrm{d} y$ so that $T (\tilde{x},y,t)$ satisfies a same equation with zero cross sectional average function  $\tilde{a}_0 (y)=a_{0} (y)-\int\limits_{0}^{1}a_{0} (y)\mathrm{d} y$.

\end{remark}

\begin{remark}
A special case of flow $v(y,\xi(t))$ is the multiplicatively separable function $v (y,\xi(t))=u(y)\xi(t)$, which has received considerable interest in the literature \cite{majda1993random, mclaughlin1996explicit, bronski1997scalar}. In this case, $v(y,\sqrt{\gamma}z)$ have the Hermite polynomial expansion with coefficients $a_{1}= \frac{\sqrt{\gamma}}{2}u(y) $, $a_{n}=0$ if $n=0 ,\text{or } n \geq 2$. By the theorem \ref{thm:WindModelApproximation}, we have  $\kappa_{\mathrm{eff}}= \frac{\lambda^{(2)}-\lambda^{(1,1)}}{2}$ and 
  \begin{scriptsize}
\begin{equation}\label{eq:eigenvalueOU}
\begin{aligned}
  & \lambda^{(2)}= 2+\mathrm{Pe}^{2} \sqrt{\gamma } \int_{0}^{1}u(y) \left(\frac{\cosh \left(\sqrt{\gamma } \left(y+\frac{1}{2}\right)\right)}{\sinh\left(\sqrt{\gamma }\right)}  \int_{0}^{1}\mathrm{d}s u(s) \cosh \left(\sqrt{\gamma } \left(\frac{1}{2}-s\right)\right) -\int_{0}^y \mathrm{d}su(s) \sinh \left(\sqrt{\gamma } (y-s)\right) \right) \mathrm{d}y  \\
  &\lambda^{(1,1)}=\mathrm{Pe}^{2}\left( \int\limits_{0}^{1} u(y) \mathrm{d} y\right)^{2} \\
\end{aligned}
\end{equation}
\end{scriptsize}

When $\gamma\rightarrow \infty$, the stationary Ornstein-Uhlenbeck Process converges to the Gaussian white noise process  and $\lambda^{(2)},\lambda^{(1,1)}$ converge to
\begin{equation}\label{eq:eigenvalueWhite}
\begin{aligned}
 \lambda^{(2)}=2+ \mathrm{Pe}^2\int\limits_{0}^{1} u^2(y)\mathrm{d}y,&\quad  \lambda^{(1,1)}= \mathrm{Pe}^{2}\left( \int\limits_{0}^{1} u(y) \mathrm{d} y\right)^{2}\\
\end{aligned}
\end{equation}
\end{remark}

In the following sections, we will elaborate the application of theorem \ref{thm:WindModelApproximation} in the field of shear dispersion and scalar intermittency.
\section{Shear Dispersion and Ergodicity}
\label{sec:ShearDispersion}
The theorem \ref{thm:WindModelApproximation} surprisingly shows that the stationary Ornstein-Uhlenbeck process dependent random shear flow induces a deterministic effective diffusivity at long times. In contrast, in free space, the effective diffusivity (the normalized, centered, second spatial moment of the scalar) is random and time dependent, see detailed discussion in section \ref{sec:DeterministicInitialData}. In this section, with the effective advection-diffusion equation derived in the theorem \ref{thm:WindModelApproximation}, we will show the connection to the Taylor dispersion, the ergodicity of the random field and the long time asymptotic expansion of  OU process related time integral.

\subsection{Taylor Dispersion}
The theorem \ref{thm:WindModelApproximation} also provides a stochastic proof for Taylor dispersion induced by steady flow. We can eliminate the stationary Ornstein-Uhlenbeck process dependence of flow and obtain the steady flow either by choosing $v(y,z)=v(y)$ or letting the dispersion parameter $\sigma$ of stationary OU process to be zero in the dimensional equation \eqref{eq:advectionDiffusion}. The effective diffusivity in this case is 
\begin{equation}
\begin{aligned}
\kappa_{\mathrm{eff}}= &1-\frac{1}{2}\mathrm{Pe}^{2} \int\limits_{0}^{1}v (y)\int\limits_{0}^y \int\limits_{0}^{y_1}v(y_{2})\mathrm{d}y_2\mathrm{d}y_1 \mathrm{d} y 
 \\
=  &1+\frac{1}{2}\mathrm{Pe}^{2} \int\limits_{0}^{1} \left( \int\limits_{0}^y v(y_{1})\mathrm{d}y_1 \right)^{2} \mathrm{d} y  \\
\end{aligned}
\end{equation} 
where the second step follows the integration by parts. This is the formula of the Taylor dispersion induced by a steady shear flow in a  parallel-plate channel in \cite{camassa2010exact,mercer1990centre}.

The enhanced diffusivity induced by the steady flow is inversely proportional to the molecular diffusivity \cite{taylor1953dispersion,camassa2010exact}, the one induced by periodic time varying flow is proportional to the molecular diffusivity \cite{ding2020enhanced,jimenez1984contaminant,chatwin1975longitudinal}. However, for the random flow we study in this paper, the enhanced diffusivity behaves differently. For example, when $v(y,\xi (t))= y\xi(t)$, the effective diffusivity in dimensional form is 
\begin{equation}\label{eq:effDiffusivityExample1}
\begin{aligned}
\kappa_{\mathrm{eff}}= &\kappa+g^2 \left( \frac{ L^2}{24}-\frac{ \kappa }{2 \gamma }+\frac{ \kappa ^{3/2} \tanh \left(\frac{\sqrt{\gamma } L}{2 \sqrt{\kappa }}\right)}{\gamma ^{3/2} L} \right)\\
\end{aligned}
\end{equation} 
There is a term in the expression for the effective diffusivity which is independent on $\kappa$ and a term which is nonlinearly dependent on $\kappa$. An extreme case is the zero correlation time $\gamma^{-1}=0$, where $\xi(t)$ becomes the Gaussian white noise process and equation \eqref{eq:effDiffusivityExample1} becomes 
\begin{equation}\label{eq:effDiffusivityExample2}
\begin{aligned}
\kappa_{\mathrm{eff}}= &\kappa+ \frac{ L^2g^{2}}{24}\\
\end{aligned}
\end{equation} 
The equation \eqref{eq:effDiffusivityExample2} implies that the enhanced diffusivity is totally independent on $\kappa$ when the correlation time vanishes. For the other limit of the correlation time $\gamma^{-1} \rightarrow \infty$, the effective diffusivity has the following asymptotic expansion
\begin{equation}
\begin{aligned}
\kappa_{\mathrm{eff}}= &\kappa+\frac{\gamma  g^2 L^4}{240 \kappa }+\mathcal{O} (\gamma^{\frac{3}{2}})\\
\end{aligned}
\end{equation}
When the correlation time is longer, the flow behaves more like the deterministic steady flow.

\begin{figure}
  \centering
    \includegraphics[width=5.5cm, height=3.4cm]{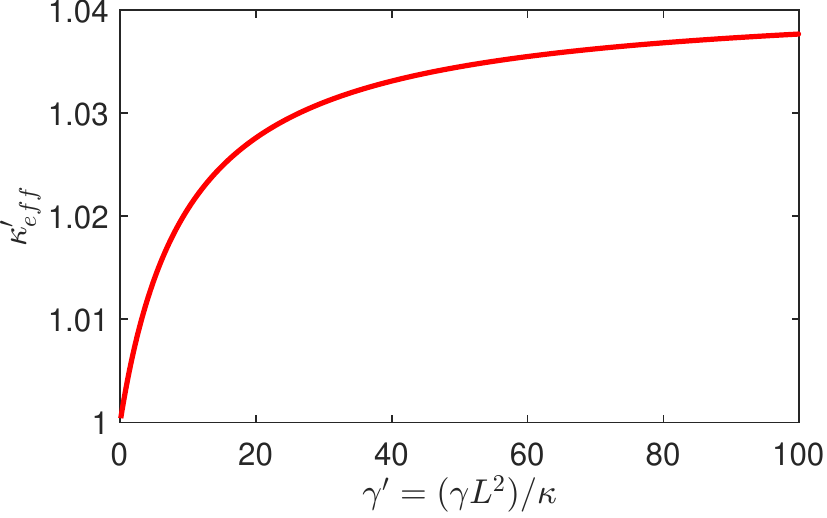}
  \caption{The non-dimensional effective diffusivity $\kappa'_{\mathrm{eff}}$for various non-dimensional damping parameter $\gamma'=\gamma L^{2}/\kappa $ }
  \label{fig:KappaEffgamma}
\end{figure}

 

We can interpret the eigenvalue as the energy of the associated two particle system similar to what was done by Bronski and McLaughlin \cite{bronski1997scalar}. Since the ground state energy of the fully interacting two particle problem is lower than the energy of two independent particle problems, we always have $\lambda^{(2)}-2 \geq \lambda^{(1,1)}$. The difference yields the enhanced diffusion. Hence, similar to the deterministic flow, the enhanced diffusivity vanishes if and only if there is no spatial dependence in the flow, that is, $v(y,z)=v(z)$.

\subsection{Zero diffusivity}
As the molecular diffusivity tends to zero, the non-dimensional $\gamma'={L^{2}\gamma}/{\kappa} \rightarrow \infty$, where $\xi (t)$ converges to the Gaussian white noise.
However, when the molecular diffusivity is exactly zero, the effective diffusivity would be random. To further understand this, we consider the equation \eqref{eq:advectionDiffusionNonDimension} with line source initial data $T_0(x,y)=\delta (x)$ and without the diffusion term. In this case, the equation can be solved by the method of characteristic:
\begin{equation}
\begin{aligned}
T (x,y,t)= & \delta (x- u (y)\int\limits_0^t\xi (s)\mathrm{d}s)\\
\end{aligned}
\end{equation}
Then it leads to the first and second Aris moment
\begin{equation}
\begin{aligned}
\bar{T}_1= &\int\limits_0^t\xi (s)\mathrm{d}s  \int\limits_0^1 u (y)\mathrm{d} y \\
\bar{T}_2= &  \left(  \int\limits_0^t\xi (s)\mathrm{d}s \right)^2\int\limits_0^{1} u^{2} (y) \mathrm{d} y\\
\end{aligned}
\end{equation}
Base on the formula \eqref{eq:ArisEffectiveDiffusion}, we have
\begin{equation}
\begin{aligned}
\kappa_{\mathrm{eff}}= & \left(\int\limits_0^{1} u^{2} (y) \mathrm{d} y - \left( \int\limits_0^{1} u (y) \mathrm{d} y \right)^{2}  \right)    \lim\limits_{t\rightarrow \infty} \frac{ 1}{2t}\left(  \int\limits_0^t\xi (s)\mathrm{d}s \right)^2\\
=&\left(\int\limits_0^{1} u^{2} (y) \mathrm{d} y - \left( \int\limits_0^{1} u (y) \mathrm{d} y \right)^{2}  \right)  \frac{1}{2} B^{2} (1)\\
\end{aligned} 
\end{equation}
In this case, $\kappa_{\mathrm{eff}}$ is a random variable. However, if we consider its assemble average with respect to the stochastic process $\xi (t)$, we have
\begin{equation}\label{eq:ArisEffectiveDiffusionAssemble}
\begin{aligned}
\left\langle \kappa_{\mathrm{eff}} \right\rangle_{\xi (t)} =&\frac{1}{2}\left(\int\limits_0^{1} u^{2} (y) \mathrm{d} y - \left( \int\limits_0^{1} u (y) \mathrm{d} y \right)^{2}  \right)   \\
\end{aligned}
\end{equation}
The above equation shows that, when molecular diffusivity becomes zero, the equation \eqref{eq:effDiffusivityExample2} is still valid in the sense of assemble average.

\subsection{Ergodicity}
In this section, we will show the ergodicity of the OU process yields the ergodicity of the random passive scalar field. More precisely, we can construct the single point statistics of scalar field from a single realization of the solution of the equation \eqref{eq:advectionDiffusionNonDimension} and vice versa. Here, the single-point statistics, namely the moment of the random scalar field at point $(x,y)$, are $\left\langle T^{N}(x,y,t) \right\rangle=\mathbf{\Psi}_N (\mathbf{x},\mathbf{y},t)$ , where all components of $\mathbf{x}, \mathbf{y}$ are $x,y$, namely $x=x_{1}=x_{2}=...=x_{N}, y=y_{1}=y_{2}=...=y_{N}$. By the Sherman-Morrison formula \cite{sherman1950adjustment}, $\Lambda_{1}^{-1}=(\lambda^{(2)}-\lambda^{(1,1)})^{-1}\left( I- \frac{\lambda^{(1,1)} \mathbf{e}^{\text{T}}\mathbf{e} }{\lambda^{(2)}+ (N-1)\lambda^{(1,1)}} \right)$, and by the matrix determinant lemma, $\text{det} (\Lambda)= (\lambda^{(2)}-\lambda^{(1,1)})^{N}\left(1+ \frac{N \lambda^{(1,1)}}{\lambda^{(2)}-\lambda^{(1,1)}} \right)$. The equation \eqref{eq:NpointCorrelationShear} leads to the formula of $N$-th moment
\begin{equation}
\begin{array}{rl}\label{eq:NMoment}
\left\langle T^{N}(x,y,t) \right\rangle=&  \frac{\exp \left( - \frac{Nx^{2}}{2t (\lambda^{(2)}-\lambda^{(1,1)})} \left( 1- \frac{N\lambda^{(1,1)}}{\lambda^{(2)}+ (N-1)\lambda^{(1,1)}} \right) \right) }{(2\pi (\lambda^{(2)}-\lambda^{(1,1)}))^{\frac{N}{2}} \sqrt{1+ \frac{N \lambda^{(1,1)}}{\lambda^{(2)}-\lambda^{(1,1)}}}} \left( \int\limits_0^1 \hat{T}_{0} (0,y)\mathrm{d}y \right)^{N}          +\mathcal{O} (t^{-\frac{N+2}{2}} ). \\
\end{array}
\end{equation}
This formula \eqref{eq:NMoment} shows that $\lambda^{(1,1)}$ and $\lambda^{(2)}$ fully determines $T^N (x,y,t)$. Conversely, once $T^{N} (x,y,t)$ is known, a simple algebraic calculation yields the values of $\lambda^{(2)},\lambda^{(1,1)}$,
\begin{equation}
\begin{aligned}
\lambda^{(1,1)}= &\frac{1}{2\pi t} \sqrt{\left( \frac{\left\langle T^0 (0,0,t) \right\rangle }{  \left\langle T (0,0,t) \right\rangle} \right)^{4}- \left( \frac{\left\langle T^0 (0,0,t) \right\rangle }{\left\langle T^{2} (0,0,t) \right\rangle} \right)^{2}} \\
\lambda^{(2)}= &\frac{1}{2\pi t} \left( \frac{\left\langle T^0 (0,0,t) \right\rangle }{  \left\langle T (0,0,t) \right\rangle} \right)^{2} \\
\end{aligned}
\end{equation}
To show the ergodicity of this problem, it is enough to show we can compute the $\lambda^{(2)}, \lambda^{(1,1)}$ by the spatial and temporal average of a single realization of the random field. To do that, we first review the Aris moment. 

An alternative approach to study the enhanced dispersion induced by the shear flow is using the Aris moments. Aris showed in \cite{aris1956dispersion} that one could write down a recursive system of partial differential equations for the spatial moments of the tracer $T$. The streamwise moment and full moment are defined as:
\begin{equation}\label{eq:ArisMomentLongtime12}
\begin{aligned}
T_{n} (y,t) = & \int\limits_{-\infty}^{\infty}x^{n}T (x,y,t)\mathrm{d}x\\
\bar{T}_{n}=  & \int\limits_{0}^{1}T_{n} (y,t)\mathrm{d} y\\
\end{aligned}
\end{equation}
The first two full moments have the following long time asymptotic expansions
\begin{equation}\label{eq:ArisEffectiveDiffusion}
\begin{aligned}
\bar{T}_{1}=& \mathrm{Pe}\int\limits_0^{t}\bar{v}(\xi(s))\mathrm{d}s+\mathcal{O} (e^{-\pi^{2}}t)\\
\bar{T}_2  - \bar{T}_{1}^2= &  2\kappa_{\mathrm{eff}}t+\mathcal{O} (1)\\
\end{aligned}
\end{equation}
Using the ergodicity of $\xi(t)$ and the theorem \ref{thm:WindModelApproximation}, we have
\begin{equation}
\begin{aligned}
\frac{\lambda^{(2)}-\lambda^{(1,1)}}{2}= &\lim\limits_{t\rightarrow \infty} \frac{\bar{T}_2  - \bar{T}_{1}^2}{2t}, \\
  \lambda^{(1,1)}=& \lim\limits_{A\rightarrow \infty} \frac{1}{A}\int\limits_0^{A} \left( \frac{\partial T}{\partial t} (s) \right)^{2} \mathrm{d} s.\\
\end{aligned}
\end{equation}
where the second line  holds for a multiplicatively separable function $v (y,\xi(t))=u(y)\xi(t)$. For the non-multiplicative case, similar results hold. Hence, with the knowledge of a single realization of the random scalar field $T (x,y,t)$, we can compute the $\lambda^{(2)}, \lambda^{(1,1)}$ and all assemble moments $\left\langle T^{N} (x,y,t) \right\rangle$ in turn.  

\subsection{ Long time Asymptotic expansion of  OU process related time integral}
The effective diffusivity derived by the Aris moment approach and by theorem \ref{thm:WindModelApproximation} must be identical.
By solving the recursive equation of $T_n$ with the flow $v(y,\xi(t))=u(x) \xi (t)$, we derive the formula of the first and second Aris moments in the appendix \ref{sec:ArisMomentUXi}.  Then, the equation \eqref{eq:ArisMomentLongtime12} leads to the relation:
\begin{equation}
\begin{aligned}
 \kappa_{\mathrm{eff}}=& 1 + \lim\limits_{t\rightarrow \infty} \frac{\mathrm{Pe}^{2}}{t}  \sum\limits_{n=1}^{\infty}\left(  \int\limits_{0}^{1}u(y)\cos n\pi y \mathrm{d}y\right)^{2} \int\limits_{0}^{t} e^{-n^2\pi ^2  s}\xi (s)\int\limits _0^se^{n^2 \pi ^2 \tau} \xi(\tau)\mathrm{d} \tau \mathrm{d} s\\
\end{aligned}
\end{equation}
By the conclusion $\kappa_{\mathrm{eff}}= \frac{\lambda^{(2)}-\lambda^{(1,1)}}{2}$, we have the following relation
\begin{equation}\label{eq:OUIntegral}
\begin{aligned}
\frac{\mathrm{Pe}^{2}}{t}  \sum\limits_{n=1}^{\infty}\left(  \int\limits_{0}^{1}u(y)\cos n\pi y \mathrm{d}y\right)^{2} \int\limits_{0}^{t}\int\limits _0^s  e^{-n^2 \pi ^2 (s-\tau)} \xi(\tau)\xi (s) \mathrm{d} \tau \mathrm{d} s=&\frac{\lambda^{(2)}-\lambda^{(1,1)}}{2}-1 +\mathcal{O} (t^{-1})\\
\end{aligned}
\end{equation}
This relation provides a bunch of novel long time asymptotic expansions of OU process dependent integrals. For example, let $u (y)= \cos n \pi y$, we have
\begin{equation}\label{eq:OUIntegralExample1}
\begin{aligned}
I:=\frac{1}{t} \int\limits_{0}^{t} e^{-n^{2}\pi ^2  s}\xi (s)\int\limits _0^se^{n^{2}\pi ^2 \tau} \xi(\tau)\mathrm{d} \tau \mathrm{d}s&=\frac{1}{2}-\frac{\pi ^2 n^2}{2 \left(\gamma +\pi ^2 n^2\right)}+\mathcal{O} (t^{-1})\\
\end{aligned}
\end{equation}
In statistics, one interesting problem is to estimate the parameter $\gamma$ based on  (discrete or continuous) observations of $\xi (t), t \in [0,A]$ as $A\rightarrow \infty$ when $\gamma$ is unkown. The equation \eqref{eq:OUIntegralExample1} suggest an estimator of $\gamma$:
\begin{equation}
\begin{aligned}
\gamma &= \frac{2 n^{2}\pi ^2 I}{1-2 I}\\
\end{aligned}
\end{equation}
where $I$ denotes the left hand side of equation \eqref{eq:OUIntegralExample1}. One may choose suitable $u(y)$ to build a better estimator of $\gamma$ from relation \eqref{eq:OUIntegral}.

\section{Scalar Intermittency}
\label{sec:scalarIntermittency}
Now we switch our attention to the long time limiting PDF of the random scalar field.  The solution of the effective equation \eqref{eq:WindModel} have the same PDF as the original equation \eqref{eq:advectionDiffusionNonDimension} at long time. Unlike the original equation, the effective equation  has an explicit expression. Due to those properties the effective advection-diffusion equation  is a powerful tool for computing the long time limiting PDF. In this section, we will focus on the flow $v(y,\xi(t))=u(x)\xi(t)$ for three classes of initial data:1) deterministic initial data, 2) square integrable Gaussian random initial data, 3) wave function with a Gaussian random amplitude. 

\subsection{Deterministic Initial Data}\label{sec:DeterministicInitialData}
When the initial data is a deterministic integrable function, the long time asymptotic expansion of the solution of equation \eqref{eq:WindModel} is
\begin{equation}\label{eq:invariantMeasureDSol}
\begin{aligned}
T(x,y,t)= & \int\limits_{0}^{1}\hat{T}_0(0,y) \mathrm{d} y  \frac{1}{\sqrt{4\pi\kappa_{\mathrm{eff}} t}} \exp \left( - \frac{ \left( x- \mathrm{Pe}\bar{u} \int\limits_0^t \xi(s) \mathrm{d} s \right)^2}{4\kappa_{\mathrm{eff}}t}  \right)+O \left( \frac{1}{t^{\frac{3}{2}}} \right)  \\
\end{aligned}
\end{equation}
To explore the invariant measure of $T (x,t)$, we consider the rescaling of $T (x,t)$, 
\begin{equation}
\begin{aligned}
\tilde{T} (x,y,t)=\frac{\sqrt{4\pi\kappa_{\mathrm{eff}} t}}{\int\limits_{0}^{1}\hat{T}_0(0,y) \mathrm{d} y} T (x,y,t)=  \exp \left( - \frac{ \left( x- \mathrm{Pe}\bar{u} \int\limits_0^t \xi(s) \mathrm{d} s \right)^2}{4\kappa_{\mathrm{eff}}t}  \right)+O \left( \frac{1}{t} \right) \\
\end{aligned}
\end{equation}
From the above equation, we see that every point in the domain has the same leading order of the long time asymptotic expansion. Without loss of generality, we focus on the scalar at point $x=0, y=0$, $\tilde{T}(0,0,t)$. Thus, the ensuing probability density function is
\begin{equation}\label{eq:invariantMeasureD1}
\begin{aligned}
f_{\tilde{T}} (z)=\frac{z^{\frac{1}{\beta }-1}}{\sqrt{-\pi \beta  \log (z)}}\quad z\in[0,1]
\end{aligned}
\end{equation}
where $\beta= \frac{\mathrm{Pe}^2 \bar{u}^2 v(t)}{2 t \kappa _{\text{eff}}} =\frac{\mathrm{Pe}^2 \bar{u}^2 }{2  \kappa _{\text{eff}}}+O (t^{-1})$ and $v(t)$ is the variance of $\int\limits_0^t \xi (s)\mathrm{d}s$. $f_{\tilde{T}} (z)$ always has the logarithmic singularity at $z=1$. It is continuous at $z=0$ when $\beta\leq 1$, and singular when $\beta>1$ (see figure \ref{fig:DeterministicPdf}). Article \cite{camassa2020persisting} reports that as $\mathrm{Pe}$ increases, $f_{\tilde{T}} (z)$ changes from negatively-skewed to positively-skewed. Our formula for the invariant measure here quantitatively verifies that conclusion.

\begin{figure}
  \centering
  \subfigure{
    \includegraphics[width=0.8\linewidth]{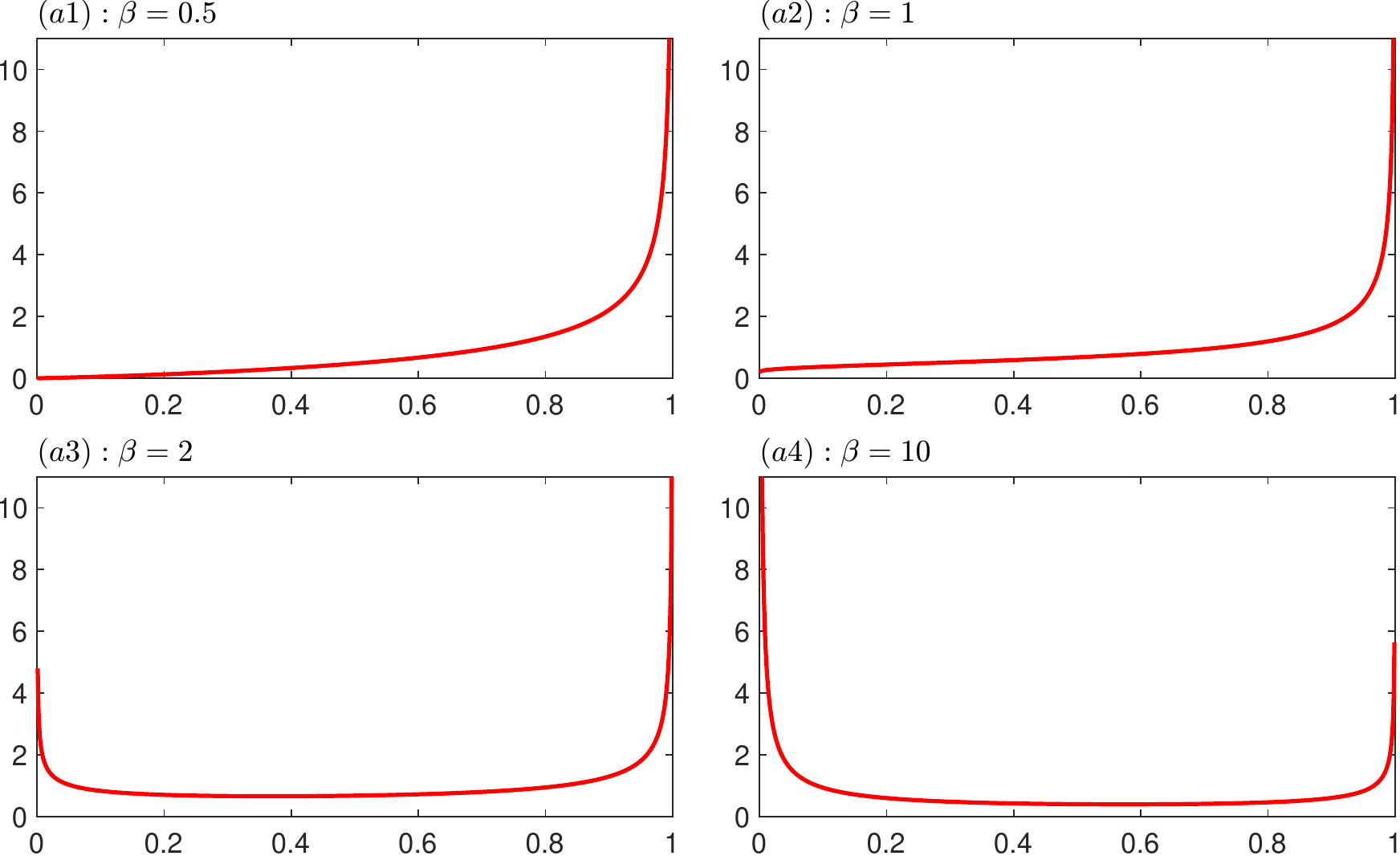}
  }
  \hfill
  \caption[]
  { The invariant measure $f_{\tilde{T}} (z)$  in equation \eqref{eq:invariantMeasureD1} for different parameters $\beta$. $f_{\tilde{T}} (z)$ changes from negatively-skewed to positively-skewed as $\beta$ increases. }
  \label{fig:DeterministicPdf}
\end{figure}

\begin{remark}
Since the formula of $N$-th moment is available  in this case, we can also derive the  invariant measure \eqref{eq:invariantMeasureD1} by the Laplace transformation based reconstruction method described in the section 3.4 of \cite{bronski2007explicit}. By equation \eqref{eq:NMoment}, we have
\begin{equation}
\begin{aligned}
  \left\langle T^N(0,0,t) \right\rangle=&\frac{1}{(4 \pi t \frac{\lambda^{(2)}-\lambda^{(1,1)}}{2})^{\frac{N}{2}}}\left( \int\limits_{0}^{1}\hat{T}_0(0,y) \mathrm{d} y \right)^{N} \frac{1}{\sqrt{1+  \frac{N\lambda^{(1,1)}}{\lambda^{(2)}-\lambda^{(1,1)}}}}+O(\frac{1}{t^{\frac{N+2}{2}}})\\
\end{aligned}
\end{equation}
 After rescaling, we have 
\begin{equation}
\begin{aligned}
  \left\langle \tilde{T}^N \right\rangle =& \frac{1}{\sqrt{N \beta +1}}+O(t^{-1})\\
\end{aligned}
\end{equation}
where $\beta=\frac{\lambda^{(1,1)}}{\lambda^{(2)}-\lambda^{(1,1)}}=\frac{\mathrm{Pe}^2 \bar{u}^2 }{2  \kappa _{\text{eff}}}$ which is equivalent to previous definition of $\beta$. Define the moment function as $\mu(s)=  \left\langle \tilde{T}^s \right\rangle = \frac{1}{\sqrt{s \beta+1}}+O(t^{-1})$ by extending the $N$-th moment formula from the integer domain to the complex domain. Once the moment function of $\tilde{T}$ is determined, we can compute the distribution of $\tilde{T}$ by the formula from \cite{bronski2007explicit}:
\begin{equation}
\begin{aligned}
 f(\xi)=& \frac{\mathscr{L}^{-1}(\mu(s))(-\ln \xi)}{\xi} \\
\end{aligned}
\end{equation}
where $\mathscr{L}$ denotes the Laplace transformation. By  inverse Laplace transformation, we derive the invariant measure \eqref{eq:invariantMeasureD1} again. 
This method requires the analytic formula of moment function and its inverse Laplace transformation. One may resort the effective equation approach when those information are not available.

\end{remark}

\begin{remark}
The invariant measure formula \eqref{eq:invariantMeasureD1} only used the single Fourier mode of the initial data $\int\limits_{0}^{1}\hat{T}(0,y)\mathrm{d} y$. One could obtain more accurate estimation of the rescaling factor and $\beta$ by using the whole information of initial data. For example, let's assume the initial data is $T_0 (x,y)= {\exp \left(-\frac{x^2}{2 s}\right)}/{\sqrt{2 \pi  s}}$. computing the long time asymptotic expansion of the solution of the wind model \eqref{eq:WindModel} without approximating $\int\limits_{0}^{1}\hat{T}(k,y)\mathrm{d} y$ by  $\int\limits_{0}^{1}\hat{T}(0,y)\mathrm{d} y$  yields 
\begin{equation}\label{eq:invariantMeasureD2}
\begin{aligned}
\tilde{T} (x,y,t)=\sqrt{2\pi s+4\pi\kappa_{\mathrm{eff}} t} T (x,y,t),& \quad 
\beta =\frac{2 \mathrm{Pe}^2 \bar{u}^2 v(t)}{4 t \kappa _{\text{eff}}+2 s} \\
\end{aligned}
\end{equation} 
 Formula \eqref{eq:invariantMeasureD1}, \eqref{eq:invariantMeasureD2} and $\beta$ lead to the same asymptotic result at long times. We implement the backward Monte-Carlo method described in the section 5 of \cite{camassa2020persisting}.  The simulation results shown in figure \ref{fig:invariantMeasure2} demonstrate that the formula \eqref{eq:invariantMeasureD2} is more accurate than formula \eqref{eq:invariantMeasureD1} at shorter times.    

\end{remark}

\begin{figure}
  \centering
    \includegraphics[width=5.5cm, height=3.4cm]{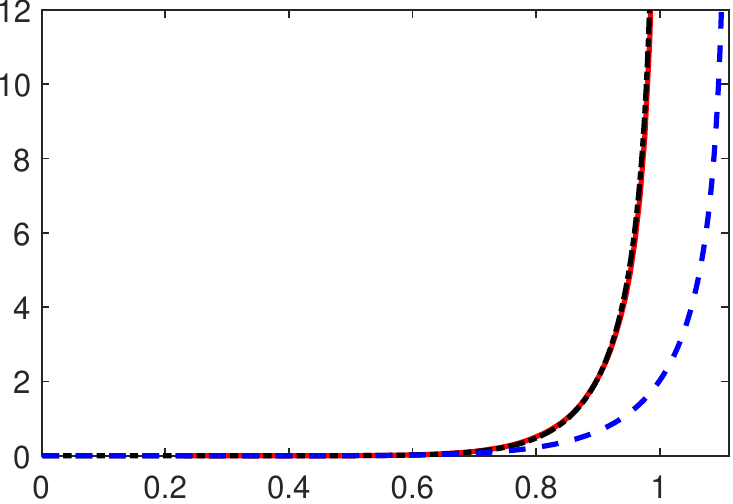}
  \hfill
  \caption[]
  {The invariant measure of the solution of equation \eqref{eq:advectionDiffusionNonDimension} with the flow $\mathrm{Pe}v(y,\xi (t))=y+\frac{1}{2}$ and initial condition $T_0(x,y)=\frac{1}{\sqrt{\pi}}e^{-x^2}$.  The red solid curve is the PDF from the numerical simulation at $t=1$. The blue dash curve is the  graph of equation \eqref{eq:invariantMeasureD1}. The back dot dash curve is the graph of equation \eqref{eq:invariantMeasureD2}. We use suitable rescaling factor for three of them such that the PDF from the simulation has the support $[0,1]$.}
  \label{fig:invariantMeasure2}
\end{figure}

\begin{remark}
With Gaussian random initial data and in the absence of impermeable boundaries, Vanden-Eijnden  \cite{vanden2001non} shows that the invariant measure of the scalar is independent the correlation time of the OU process. Whereas, in the presence of impermeable boundaries, our formula \eqref{eq:invariantMeasureD1} suggests that the correlation time has a significant impact on the shape of invariant measure, even the number of singularities it has. To get a deeper understanding about this, let's briefly review the free space problem with the deterministic initial data. In free space, we can derive the solution with the flow $v(y,\xi(t))=y\xi(t)$ and the initial condition $T_0 (x,y)=\delta(x)$ via method of characteristics:
\begin{equation}
\begin{aligned}
  T(x,y,t)= &\frac{\exp \left( \frac{- (x-y \xi(t))^2}{4 t (1+ \int\limits_{0}^{t}(\int\limits_{0}^{s}\xi(\tau)\mathrm{d} \tau)^{2}\mathrm{d}s)} \right)}
{\sqrt{4\pi t (1+ \int\limits_{0}^{t}(\int\limits_{0}^{s}\xi(\tau)\mathrm{d} \tau)^{2}\mathrm{d}s)}}  \\
\end{aligned}
\end{equation}
From this expression, we can see that this type of flow  will induce a time dependent random anomalous effective diffusivity $k_{\mathrm{eff}}=1+ \int\limits_{0}^{t}(\int\limits_{0}^{s}\xi(\tau)\mathrm{d} \tau)^{2}\mathrm{d}s$, where the second term on the right hand side is refer to as the $L^2$ norm of $ \int\limits_0^t \xi(s)\mathrm{d}s$. To obtain the invariant measure, we consider the rescaling $\tilde{T}=t\sqrt{4\pi} T$:
\begin{equation}
\begin{aligned}
\tilde{T}\sim & \left( \frac{\int\limits_{0}^{t}(\int\limits_{0}^{s}\xi(\tau)\mathrm{d} \tau)^{2}\mathrm{ds}}{t} \right)^{-\frac{1}{2}}\sim  \left( \int\limits_0^1B^{2} (s)\mathrm{d} s \right)^{-\frac{1}{2}} \quad \text{as } t\rightarrow \infty
\end{aligned}
\end{equation} 
where the second step follows the fact $\frac{1}{\sqrt{t}} \int\limits_0^{ts}\xi(\tau) \mathrm{d} \tau = B(s)+o(1)$ as $t \rightarrow \infty$. Hence, the invariant measure is only dependent on the $L^2$ norm of $B (t)$ and is independent on the correlation time of OU process. Unlike the quadratically growing variance in free space problem, the linearly growing variance allows the exponential function factor in the solution \eqref{eq:invariantMeasureDSol} to give a non vanishing contribution at long times.

\end{remark}


\subsection{Random Initial Data}
Although each realization of the random initial data is bounded, there is no uniform bound for all realizations of the initial data. This unboundedness makes the random field fall into the category of Hamburger moment problem rather than Hausdorff moment problem as the measure is not necessarily compactly supported. However, thanks to the incompressibility of the flow and the diffusion, the infinity norm of the random field decays at least algebraically. At a sufficiently large time and for the random initial data we studied in this subsection, the random field is almost surely bounded. The conclusion of the Hausdorff moment problem is valid thereafter. Another plausibility argument to fill this gap lies in the law of total probability, conditioning on a single realization of the initial data.  Denote $T, T'$ as the solution of the equation \eqref{eq:advectionDiffusionNonDimension} and the effective equation \eqref{eq:WindModel} respectively. Then we have
\begin{equation}
\begin{aligned}
f_{T}= & \int\limits_{g}^{}f_{T|T_{0}}(T|T_{0}=g) f_{T_{0}} (g)\mathrm{d} g\\
  \sim &\int\limits_{g}^{}f_{T'|T_{0}}(T'|T_{0}=g) f_{T_{0}} (g)\mathrm{d} g=f_{T'} \\
\end{aligned}
\end{equation}
where the first step follows the law of total probability by conditioning on the initial condition. The second step follows the theorem \ref{thm:WindModelApproximation}: $T$ and $T'$ have the same PDF at long time for the same deterministic initial condition. In addition, the law of total probability turn out to be a useful tool for studying the random initial data in free space problem\cite{camassa2008evolution}. By conditioning on a single realization of the flow, one takes the advantage of the Gaussianity of the initial data to compute the invariant measure more easily.



In this subsection, we will study the invariant measure of scalar field with the random wave initial data and square integrable spectral density, which has been studied in \cite{camassa2008evolution, bronski1997scalar} in the free space or with periodic boundary condition.
 
\subsubsection{Square Integrable Spectral Density}
In this section we will consider stratified initial data with
a square integrable spectral density which depends only upon
the spatial variable $x$,
\begin{equation}
\begin{aligned}
T_0= \int\limits_{-\infty}^{\infty}e^{\mathrm{i} h x}  \left| h \right|^{\frac{\alpha}{2}} \hat{\phi}_0(h)\mathrm{d}B (h) \quad \alpha>-1
\end{aligned}
\end{equation} where $\hat{\phi}_0 (h)$ denotes a rapidly decaying (large $h$) cut-off function satisfying $\hat{\phi}_0 (h)=\hat{\phi}^{*}_0 (-h), \hat{\phi}_0 (h) \neq 0$ and $\mathrm{d}B(h)$ denotes complex Gaussian white noise with the correlation function
\begin{equation}
\begin{aligned}
\left\langle \mathrm{d}B \right\rangle = 0,& \quad 
\left\langle \mathrm{d}B(h) \mathrm{d} B(\eta) \right\rangle =  \delta (h+\eta) \mathrm{d} h \mathrm{d} \eta\\
\end{aligned}
\end{equation} 
The spectral parameter $\alpha$ appearing in the initial data is introduced to adjust the excited length scales of the initial scalar field, with increasing $\alpha$ corresponding to initial data varying on smaller scales. 
It is enough to derive the long time invariant measure of the solution of equation \eqref{eq:WindModel}. The solution with this type of initial condition can be obtained by the Fourier transformation and method of characteristic.
The Fourier transformation yields
\begin{equation}
\begin{aligned}
\hat{T}_t- \mathrm{i} k \bar{u} \xi(t) \hat{T} &=- \kappa_{\mathrm{eff}} k^2 \hat{T} \\
  \hat{T}&= \hat{T}_{0}\exp(\mathrm{i} k \bar{u} \int\limits_0^t \xi(s) \mathrm{d} s - \kappa_{\mathrm{eff}} k^{2} t) \\
  \hat{T}&= 2 \pi \int\limits_{-\infty}^{\infty} \delta(h+k) \left| h \right|^{\frac{\alpha}{2}} \hat{\phi}_0(h) \mathrm{d}B(h) \exp(\mathrm{i} k \bar{u} \int\limits_0^t \xi(s) \mathrm{d} s - \kappa_{\mathrm{eff}} k^{2} t) \\
\end{aligned}
\end{equation}
Then the inverse Fourier transformation yields
\begin{equation}
\begin{aligned}
T(x)= & \int\limits_{-\infty}^{\infty}e^{\mathrm{i} h x} \left| h \right|^{\frac{\alpha}{2}} \hat{\phi}_0(h)  \exp(-\mathrm{i} h \bar{u} \int\limits_0^t \xi(s) \mathrm{d} s - \kappa_{\mathrm{eff}} h^{2} t)\mathrm{d}B(h) \\
\end{aligned}
\end{equation}
The leading order of the long time asymptotic expansion of the solution is independent of $x$. Without loss of generality, we focus on the solution at $x=0$, namely,
\begin{equation}
\begin{aligned}
T(0)= & \int\limits_{-\infty}^{\infty} \left| h \right|^{\frac{\alpha}{2}} \hat{\phi}_0(h)  \exp(-\mathrm{i} h M \int\limits_0^t \xi(s) \mathrm{d} s - \kappa_{\mathrm{eff}} h^{2} t)\mathrm{d}B(h) \\
\end{aligned}
\end{equation} 
By the law of total probability, the PDF of $T(0)$ has the integral representation
\begin{equation}
\begin{aligned}
f_{T}= & \int\limits_{-\infty}^{\infty}f_{T|\eta}(T|\eta=h) f_{\eta} (h)\mathrm{d} h\\
\end{aligned}
\end{equation}
where $\eta= \int\limits_0^t \xi(s) \mathrm{d} s $. Notice that $f_{T|\eta} \sim \mathcal{N} (0, \int\limits_{-\infty}^{\infty} \left| h \right|^{\alpha} \hat{\phi}_0^2 (h)\exp( - \kappa_{\mathrm{eff}} k^2 t) \mathrm{d} k )$ and $\eta \sim \mathcal{N} (0, t+\frac{ e^{-\gamma t}-1}{\gamma})$. Hence, the PDF of $T(0)$ independent of $\int\limits_0^t \xi(s) \mathrm{d} s$ and is a Gaussian random variable with variance $\int\limits_{-\infty}^{\infty} \left| h \right|^{\alpha} \hat{\phi}_0^2 (h)\exp( - \kappa_{\mathrm{eff}} h^2 t) \mathrm{d} h$. This conclude holds for any stochastic process $\xi(t)$, which generalizes the conclusion for Gaussian white noise process in \cite{bronski1997scalar}.

\subsubsection{Random Wave Initial Data}
In this section we will study Gaussian random wave initial data possessing zero spatial mean. We assume that the Fourier transform of the initial temperature
profile is highly localized as a function of the transform variable $k$,
\begin{equation}
\begin{aligned}
\hat{T}_0 (k) = 2\pi \left( A \delta(k+a)+ A^* \delta(k-a) \right)
\end{aligned}
\end{equation}
where the asterisk denotes the complex conjugate, $A$ is a standard complex Gaussian random variable, that is, $\Re (A), \Im (A) \sim \mathcal{N} (0, \frac{1}{2})$ and $\Re (A), \Im (A)$ are independent. We assume $a^{2}t\ll 1$ so that the the ground state energy expansion based theorem \ref{thm:WindModelApproximation} applies.

 In this case, we have
\begin{equation}
\begin{aligned}
  \hat{T}(k,t)&= 2\pi\left(  A \delta(k+a)+ A^* \delta(k-a) \right) \exp(\mathrm{i} k \mathrm{Pe} \bar{u}\int\limits_0^t \xi(s) \mathrm{d} s - \kappa_{\mathrm{eff}} k^{2} t) \\
\end{aligned}
\end{equation}
The inverse Fourier transformation yields
\begin{equation}
\begin{aligned}
  T (x,t)&= \exp(- \kappa_{\mathrm{eff}} a^{2} t) \left(A\exp(-\mathrm{i}ax+\mathrm{i} a M \int\limits_0^t \xi(s) \mathrm{d} s )+ A^{*}\exp(\mathrm{i} a x-\mathrm{i} a \mathrm{Pe}\bar{u} \int\limits_0^t \xi(s) \mathrm{d} s)  \right)\\
  &=2 \exp(- \kappa_{\mathrm{eff}} a^{2} t) \Re(A) \cos(ax+ a \mathrm{Pe}\bar{u} \int\limits_0^t \xi(s) \mathrm{d} s)
\end{aligned}
\end{equation}
To explore the invariant measure of $T (x,t)$, we consider the rescaling of $T (x,t)$, 
\begin{equation}
\begin{aligned}
\tilde{T} (x,t)=\exp( \kappa_{\mathrm{eff}} a^{2} t) T (x,t)= 2 \Re(A) \cos(\eta) \\
\end{aligned}
\end{equation}
where $\eta=ax+ a \mathrm{Pe}\bar{u} \int\limits_0^t \xi(s)\mathrm{d} s$. We have 
\begin{equation}
\begin{aligned}
 \eta  \mod 2\pi &\sim U([0, 2\pi]) \quad t\rightarrow \infty\\
2\Re (A)\cos (\eta)|\eta &\sim \mathcal{N} (0, \cos^{2} (\eta))\\
\end{aligned}
\end{equation}
Hence, the leading order of the PDF's long time asymptotic expansion is independent of the spatial variable $x$. By the law of total probability, we have 
\begin{equation}
\begin{aligned}
f_{\tilde{T}} (z) = & \int\limits_0^{2\pi}f_{T|\eta} (\tilde{T}|\eta =h) f_{\eta} (h)\mathrm{d} h  =\frac{e^{-\frac{z^2}{4}} K_0\left(\frac{z^2}{4}\right)}{\sqrt{2} \pi ^{3/2}} \\
\end{aligned}
\end{equation}
where $K_{n } (z)$ is the modified Bessel function of the second kind. $K_0 (z)$ is singular at $z=0$. The tail of the $f_{\tilde{T}} (z)$ is
\begin{equation}
\begin{aligned}
f_{\tilde{T}} (z) = &e^{-\frac{z^2}{2}} \left(\frac{1}{\pi  z}-\frac{1}{2 \pi  z^3}+O\left(\frac{1}{z^{5}}\right)\right)  \quad  z\rightarrow \infty\\
\end{aligned}
\end{equation}
The variance and fourth moment are $ \left\langle \tilde{T}^2 \right\rangle =\frac{1}{2}, \left\langle \tilde{T}^4\right\rangle = \frac{9}{8}$
and then the kurtosis (flatness) is $9/2>3$, which suggests the distribution could be flatter than the Gaussian distribution. In fact, this PDF has a smaller tail than Gaussian  distribution. The comparison of the invariant measure $f_{\tilde{T}} (z)$ and the PDF of the initial condition $T(0,0,0)$ in figure \ref{fig:randomwave} shows that the invariant measure has the larger core and smaller tails than the Gaussian distribution.

Bronski and McLaughlin \cite{bronski1997scalar} studied the problem with a Gaussian white noise process $\xi (t)$, $\bar{u}=0$ and periodic boundary conditions, who showed the invariant measure is Gaussian at some time scale. We also can obtain this conclusion by the effective equation approach. When $\bar{u}=0$, $\eta$ is a deterministic value. Hence the $\tilde{T}$ becomes a Gaussian random variable.

\begin{figure}
  \centering
    \includegraphics[width=0.46\linewidth]{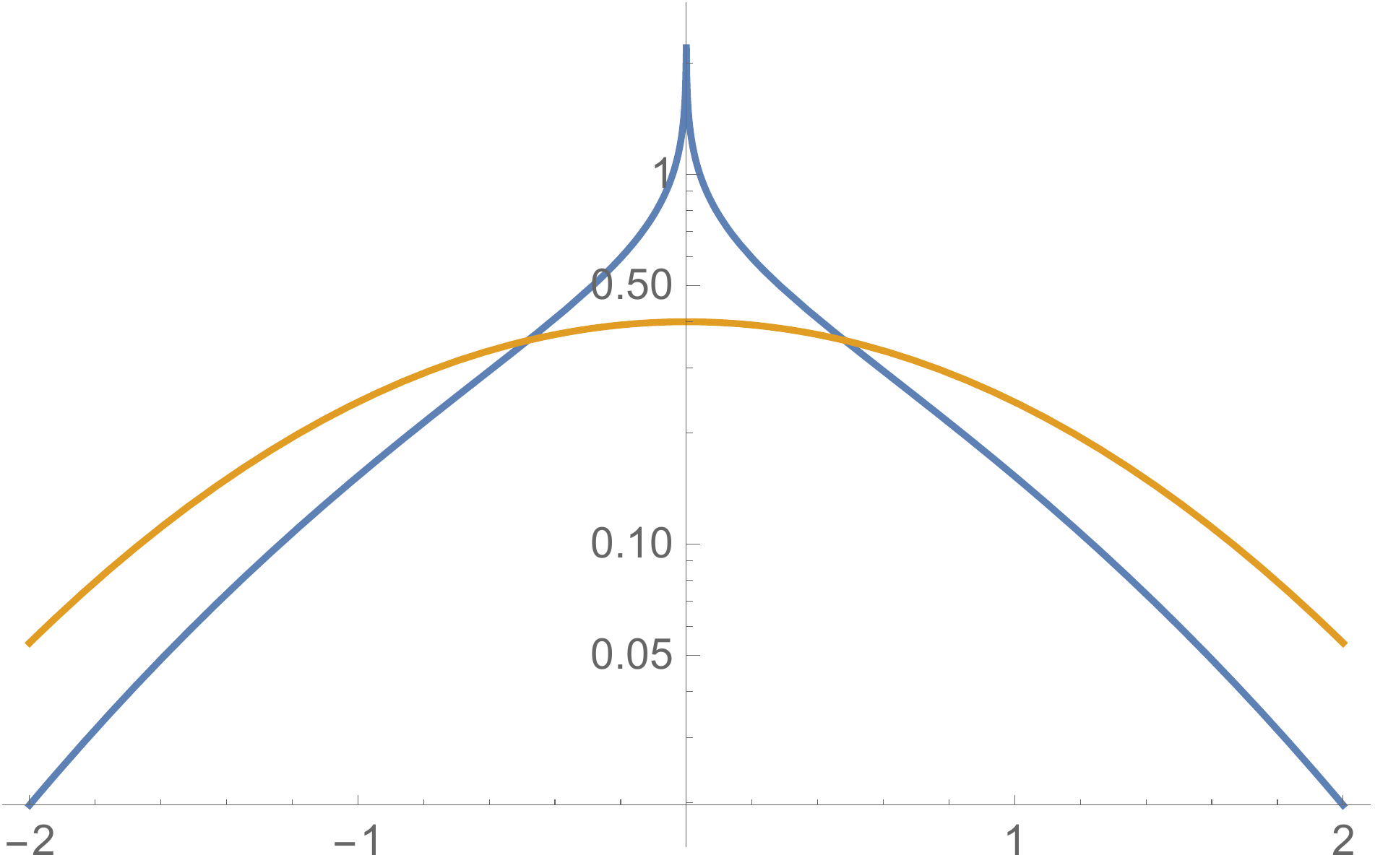}
  \caption[]
  {The semi-log plot of the distribution. The blue curve is the long time limiting PDF of $\tilde{T}(x,t)$ and the orange curve is the PDF of $T(0,0)$ which is a standard normal random variable $\mathcal{N} (0,1)$.  }
  \label{fig:randomwave}
\end{figure}

\section{Conclusion}
\label{sec:discuss}
We have studied a diffusing passive scalar in the presence of a OU dependent random shear flow in the presence of no-flux boundaries.  Long time asymptotic analysis of the closed moment equations produce simple formulae for the general $N$-point correlator.  We subsequently identified an effective advection-diffusion equation with random drift and deterministic enhanced diffusivity possessing the same exact long time moments as the full problem.  This advection-diffusion equation enjoys many properties, such as the centered second Aris moment being deterministic at all times.  Since the two equations have the exact same ensemble moments at long time, by the Hausdorff moment theorem, they have the same identical PDF at long time.  Consequently, given a single realization of the random velocity field, the centered second Aris moment of the original problem divided by $t$ must converge at long times to a deterministic constant set by the first two ensemble field moments.  Such ergodic are properties are rare in random partial differential equations, and here is particularly important when considering comparing the output of an experiment performed with a randomly moving wall (either normal or tangentially moving) with such a theory: It guarantees that one need only observe a {\em single} realization of the wall motion for the theory to be relevant at least in some measurable quantities.  Given these results, we additionally explored three different classes of initial data.  First, for deterministic initial data, we present formulae for the invariant measure.  Second, for square integrable random initial data, we show that the invariant measure will be Gaussian at long time.  Third, for waves with random amplitude, we show that the long time measures are non-Gaussian assuming the spatial average of the flow, $\bar u$, is non-zero, otherwise the limiting distribution will be Gaussian.  These results extend prior results of Bronski and McLaughlin \cite{bronski1997scalar} for more general random processes, and notably here for the case of random wave initial data, we compute the complete PDF (not just the flatness factors) for OU and white in time processes.  In previous work, we established results for the first three moments at long time \cite{camassa2020persisting}.  There we noted that in contrast with work in free-space by Vanden-Eijnden \cite{vanden2001non} where the PDF was observed to be independent of the correlation time at infinite time.  This distinction between free space and channel geometries we have extended in the present work to the full long time limiting PDF.  An interesting immediate direction involves computing the asymptotic corrections to the invariant measure.  The procedure employed by Bronski and McLaughlin \cite{bronski1997scalar} (who computed such corrections in the white noise limit through the fourth order ground state derivative) can be extended to these more general random processes.  Noteworthy, the OU case is considerably more involved as the odd derivatives do not vanish.

Future work will include considering an experimental campaign with the associated theoretical analysis. Our recent study \cite{ding2020enhanced} regarding the enhanced diffusion \cite{taylor1953dispersion} and third spatial Aris moment \cite{aris1956dispersion} induced by a periodically moving wall led to the development of an experimental framework of the model explored in this paper.  The computer controlled robotic arm we developed for the periodic study can be applied to the case of a randomly moving wall, such as the OU process $\xi(t)$, with suitable parameters for the fluid and the channel. The induced flow in the channel can be modeled by $y\xi(t)$. Hence, the tracers in the fluid satisfy the advection-diffusion equation \eqref{eq:advectionDiffusion}. The properties of the tracer's PDF can be predicted by the theory we developed here. Perhaps even more interesting will be considering cases in which the physical shear flow is not decomposed into a product of a function of space and a function of time, such as happens with the general nonlinear solutions to Stoke's second problem at finite viscosities.  We note that the general construction presented here for a OU dependent shear flow does not quite cover this case.  More involved analysis will clearly be needed to study these interesting configurations. Lastly, the random tangential motion of a non-flat wall will generate random non-sheared motions in the fluid.  We expect that applications of center manifold theory \cite{mercer1990centre,beck2015analysis,beck2020rigorous} may well be extendable to the case of random flows in such geometries.

\section{Acknowledge}
We acknowledge funding received from the following NSF Grant Nos.:DMS-1910824; and ONR Grant No: ONR N00014-18-1-2490.

\section{Appendix}

\subsection{Expansion of Eigenvalue and Eigenfunction}
\subsubsection{Impermeable Boundary Condition}\label{sec:PerturbationImpermeable}
After substituting the Taylor expansion of $\lambda_0 (\mathbf{k})$ and $\phi_{0} (\mathbf{k},\mathbf{y})$ with respect to $\mathbf{k}$ into the equation \eqref{eq:eigenvalueProblemOU}, we obtain the recursive relation of the coefficients in the expansion by comparing the coefficients of monomials of $\mathbf{k}$. We denote $\lambda^{\alpha}= \frac{\partial^{ \left| \alpha \right|}}{\partial \mathbf{k}^{\alpha}} \lambda (\mathbf{k}) |_{\mathbf{k}=\mathbf{0}}, \phi^{\alpha}= \frac{\partial^{ \left| \alpha \right|}}{\partial \mathbf{k}^{\alpha}} \phi (\mathbf{k},\mathbf{y}) |_{\mathbf{k}=\mathbf{0}}$, where $\alpha =(\alpha _{1},\alpha _{2},\ldots ,\alpha _{n})$  is a multi-index and $\left| \alpha \right|= \sum\limits_{i=1}^n a_i$. $\lambda_{0}^{(0)}, \phi_0^{(0)}$ satisfy the equation: 
\begin{equation}
\begin{aligned}
-\lambda_{0}^{(0)}\varphi_{0}^{(0)}=- \gamma z \frac{\partial \varphi_{0}^{(0)}}{\partial z} + \frac{\gamma}{2} \frac{\partial^2 \varphi_0^{(0)}}{\partial z^{2}} + \frac{\partial^2 \varphi_{0}^{(0)}}{\partial y^2},&\quad \left. \frac{\partial \varphi_{0}^{(0)}}{\partial y} \right|_{y= 0,1}=0 
\end{aligned}
\end{equation}
$\lambda_{0}^{(0)}=0, \phi_0^{(0)}=1$ are the solution. $\lambda_{0}^{(1)}, \phi_0^{(1)}$ satisfy the equation
\begin{equation}\label{eq:eigenvaluePN1O1}
\begin{aligned}
-\lambda_{0}^{(1)}=-\mathrm{i} \mathrm{Pe} v(y,\sqrt{\gamma}z) - \gamma z \frac{\partial \varphi_{0}^{(1)}}{\partial z} + \frac{\gamma}{2} \frac{\partial^2 \varphi_0^{(1)}}{\partial z^{2}}+ \frac{\partial^2 \varphi_0^{(1)}}{\partial y^2},&\quad \left. \frac{\partial \varphi_{0}^{(1)}}{\partial y} \right|_{y= 0,1}=0
\end{aligned}
\end{equation}
Fredholm alternative gives $\lambda_0^{(1)}=0$. Substituting the Hermite polynomial series representations $v (y,\sqrt{\gamma}z)= \sum\limits_{n=0}^{\infty}a_n (y,\sqrt{\gamma}) H_n (z), \phi_{0}^{(1)} (y,z)= -\mathrm{i} \mathrm{Pe}\sum\limits_{n=0}^{\infty}b_n (y,\sqrt{\gamma}) H_n (z)$ into the equation \eqref{eq:eigenvaluePN1O1}, where $H_n (z)$ is the $n$-th Hermite polynomial, gives the equation of $a_{n} (y,\sqrt{\gamma}), b_{n} (y,\sqrt{\gamma})$:
\begin{equation}
\begin{aligned}
 a_n -n \gamma b_n+ \frac{\partial^2 b_{n}}{\partial y^2}=0,&\quad   \frac{\partial b_{n}}{\partial y}|_{y= 0,1}=0 
\end{aligned}
\end{equation}
where we omit the argument $\sqrt{\gamma}$ in $a_{n},b_{n}$ to shorten the formula. We also introduce the inverse operator  $b (y) =\left( -\Delta+\lambda \right)^{-1}a (y)$ which maps the function $a (y)$ to the solution of the Helmholtz equation
\begin{equation}
-\frac{\partial^2 b (y)}{\partial y^2} + \lambda b (y)= a (y), \quad \left. \frac{\partial b}{\partial y} \right|_{ y=0,1}= 0
\end{equation}
and $b (y)$ has the integral representation
\begin{equation}\label{eq:HelmholtzSol1D}
\begin{aligned}
b (y)= &\frac{1}{\sqrt{\lambda }}\left( \frac{\cosh \left(\sqrt{\lambda } y\right) \int_0^1 a(s) \cosh \left(\sqrt{\lambda } (1-s)\right) \mathrm{d} s}{\sinh \left(\sqrt{\lambda } \right)}\right.\\
&\hspace{1cm} \left. -\int_0^y a(s) \sinh \left(\sqrt{\lambda } (y-s)\right) \mathrm{d} s \right) \\
b (y)=&-\int\limits_{0}^y \int\limits_{0}^{y_1}a(y_{2})\mathrm{d}y_2\mathrm{d}y_1\; \text{if}\; \lambda=0, \bar{a}=0\\
\end{aligned}
\end{equation}
With this notation, we have $b_n=\left( n\gamma-\Delta \right)^{-1}a_{n}$.

 $\lambda_{0}^{(2)}, \phi_0^{(2)}$ satisfy the equation
\begin{equation}\label{eq:eigenvaluePN1O2}
\begin{aligned}
-\lambda_{0}^{(2)}+2=-2\mathrm{i} \mathrm{Pe} v(y,z) \phi_0^{(1)}- \gamma z \frac{\partial \varphi_{0}^{(2)}}{\partial z} + \frac{\gamma}{2} \frac{\partial^2 \varphi_0^{(2)}}{\partial z^{2}}+ \frac{\partial^2 \varphi_0^{(2)}}{\partial y^2},&\quad \left. \frac{\partial \varphi_0^{(2)}}{\partial y} \right|_{y= 0,1}=0 
\end{aligned}
\end{equation}
Fredholm alternative gives
\begin{equation}
\begin{aligned}
\lambda_0^{(2)}= &2+ \left\langle2\mathrm{i} \mathrm{Pe} v(y,z) \phi_0^{(1)}  \right\rangle\\
=&2+2\mathrm{Pe}^{2}\sum\limits_{n=0}^{\infty} n! 2^{n}\int\limits_{0}^{1}a_n (y)\left( n\gamma-\Delta \right)^{-1}a_{n} (y)\mathrm{d} y \\
\end{aligned}
\end{equation} 
where the second step follows the identity $\frac{1}{\sqrt{\pi}}\int\limits_{-\infty}^{\infty} H_n^2 (z) e^{-z^2}=n!2^n$.  $\lambda_{0}^{(1,1)}, \phi_0^{(1,1)}$ satisfy the equation:
\begin{equation}\label{eq:eigenvaluePN1O11}
\begin{aligned}
-\lambda_{0}^{(1,1)}&=-\mathrm{i} \mathrm{Pe}\left( v(y_{1},z) \phi_0^{(0,1)}+ v(y_{2},z) \phi_0^{(1,0)}  \right)- \gamma z \frac{\partial \varphi_{0}^{(1,1)}}{\partial z} + \frac{\gamma}{2} \frac{\partial^2 \varphi_0^{(1,1)}}{\partial z^{2}}+\Delta_{2} \varphi_0^{(1,1)} \\
\left. \frac{\partial \varphi_0^{(1,1)}}{\partial y_{j}} \right|_{y_{j}=0,1}&=0 \quad j=1,2 
\end{aligned}
\end{equation}

Fredholm alternative gives:
\begin{equation}
\begin{aligned}
\lambda_0^{(1,1)}= & 2\left\langle \mathrm{i} \mathrm{Pe} v(y_{1},z) \phi_0^{(0,1)} (y_{2}),1 \right\rangle=\frac{2\mathrm{Pe}^{2}}{\gamma}\sum\limits_{n=1}^{\infty}  (n-1)! 2^{n}\left( \int\limits_{0}^{1}a_n (y)\mathrm{d} y \right)^{2} \\
\end{aligned}
\end{equation}
where the second step follows the series representation of $\int\limits_{0}^{1}\phi_0^{1} (y,z)\mathrm{d}y$:
\begin{equation}
\begin{aligned}
\int\limits_{0}^{1}\phi_0^{1} (y,z)\mathrm{d}y = &-\mathrm{i} \mathrm{Pe} \sum\limits_{n=1}^{\infty}\frac{1}{\gamma n} \int\limits_{0}^{1}a_{n} (y)\mathrm{d} y H_{n} (z)\\
\end{aligned}
\end{equation}
Alternative expression of $\lambda_0^{(1,1)}$ is available from the integral representation of $\int\limits_{0}^{1}\phi_0^{1} (y,z)\mathrm{d}y$:
\begin{equation}
\begin{aligned}
\int\limits_{0}^{1}\phi_0^{1} (y,z)\mathrm{d}y = & \frac{-2\mathrm{i} \mathrm{Pe}}{\gamma}(\int\limits_0^ze^{z_{2}^{2}}\int\limits_{-\infty}^{z_{2}} e^{-z_{1}^{2}}\bar{v}(z_{1})\mathrm{d}z_{1}\mathrm{d}z_{2} - \frac{1}{\sqrt{\pi}} \int\limits_{-\infty}^{\infty} e^{-z^2} \int\limits_0^ze^{z_{2}^{2}}\int\limits_{-\infty}^{z_{2}} e^{-z_{1}^{2}}\bar{v}(z_{1})\mathrm{d}z_{1}\mathrm{d}z_{2}\mathrm{d} z)\\
\end{aligned}
\end{equation}
Hence, we have
\begin{equation}
\begin{aligned}
\lambda_0^{(1,1)}= &\frac{4\mathrm{Pe}^{2} }{\gamma} \int\limits_{-\infty}^{\infty} e^{z^2} \left( \int\limits_{-\infty}^z e^{-z^2}\bar{v} (z_{1})\mathrm{d}z_1 \right)^{2}\mathrm{d}z\\
\end{aligned}
\end{equation}

\subsubsection{Periodic Boundary Condition}\label{sec:PerturbationPeriodic}
Instead of the impermeable boundary condition, we consider the periodic boundary condition and periodic flow in this section. we still have $\lambda_{0}^{(0)}=0, \phi_0^{(0)}=1, \lambda_{0}^{(1)}=0$. Then, we need to solve the equation \ref{eq:eigenvaluePN1O1} with the periodic boundary conditions. Assuming $v (y,\sqrt{\gamma}z), \phi_{0}^{(1)} (y,z)$ have the same form of Hermite polynomial series representations, the coefficient $a_{n} (y), b_{n} (y)$ satisfy the equation
\begin{equation}
\begin{aligned}
  a_n -n \gamma b_n+ \frac{\partial^2 b_{n}}{\partial y^2}=0,&\quad 
  b_{n} (0)=b_{n} \left( 1 \right),&\quad  \frac{\partial b_{n}}{\partial y}(0)= \frac{\partial b_{n}}{\partial y}(1)
\end{aligned}
\end{equation}
We can also represent the solution as $b_n=\left( n\gamma-\Delta \right)^{-1}a_{n}$. Now the operator has a different integral representation
\begin{equation}\label{eq:HelmholtzSolPeriodic}
\begin{aligned}
\left( \lambda-\Delta \right)^{-1} a (y)= &\frac{ \sinh \left(\sqrt{\lambda } \left(y-\frac{1}{2}\right)\right)\int_0^1 a(s) \sinh \left(\sqrt{\lambda } (L-s)\right) \, ds}{2 \sqrt{\lambda } \text{sinh}\left(\frac{\sqrt{\lambda }}{2}\right) }\\
&+\frac{\cosh \left(\sqrt{\lambda } \left(y-\frac{1}{2}\right)\right) \int_0^1 a(s) \cosh \left(\sqrt{\lambda } (1-s)\right) \, ds}{2 \sqrt{\lambda } \text{sinh}\left(\frac{\sqrt{\lambda }}{2}\right) }\\
& -\frac{\int_0^y a(s) \sinh \left(\sqrt{\lambda } (y-s)\right) \, ds}{\sqrt{\lambda }} \\
\Delta^{-1}a (y) =&- \int\limits_{0}^y \int\limits_{0}^{y_1} a (y_2)\mathrm{d}y_{2}\mathrm{d} y_1 + y\int\limits_{0}^{1}a (y_2)\mathrm{d}y_2+ \int\limits_{0}^{1} \int\limits_{0}^{y_1} a (y_2)\mathrm{d}y_{2}\mathrm{d} y_1 - \int\limits_{0}^{1}a (y_2)\mathrm{d}y_2\\
\end{aligned}
\end{equation}

With the similar perturbation analysis, we have the same form of series representation of $\lambda_0^{(2)}, \lambda_0^{(1,1)}$ except a different definition of operator $\left( \lambda-\Delta \right)^{-1}$.

\subsection{Aris Moment for the flow $u (y )\xi (t)$}\label{sec:ArisMomentUXi}
In this section, we will derive the second centered Aris moment for the flow  $u (y )\xi (t)$ and line source initial data $T_0 (x,y)=\delta (x)$. The Aris moments defined in equation \eqref{eq:ArisMomentDef} satisfy the recursive relationship called Aris equation,
\begin{equation}\label{eq:ArisMomentDef}
\begin{aligned}
(\partial_t- \Delta)T_n&=   n (n-1)T_{n-2}+ n  \mathrm{Pe}u(y,z,t) T_{n-1},\\
\left. \frac{\partial T}{\partial \mathbf{n}} \right|_{\partial \Omega}=  0, &\qquad T_n(y,z,0)= \int\limits_{-\infty}^{\infty} x^n T_{0}(x,y,z) \mathrm{d} x,\\
\end{aligned}
\end{equation}
where $T_{-1}=0$. The full moments of $T$ are then obtained though the cross-sectional average of the moments $ \bar{T}_n =  \int\limits_{0}^{1}T_n\mathrm{d}y$. Applying the divergence theorem and boundary conditions gives the recursive relationship of full moments,
\begin{equation}\label{eq:averArisMomentDef}
\begin{aligned}
\frac{\mathrm{d}  \bar{T}_{n} }{\mathrm{d} t}= &  n(n-1)  \bar{T}_{n-2}+ n  \mathrm{Pe}  \overline{u(y,z,t)  T_{n-1}} ,\\
 \bar{T}_n (0)=& \int\limits_{0}^{1}\int\limits_{-\infty}^{\infty} x^n T_{0}(x,y,z) \mathrm{d} x \mathrm{d}y.\\
\end{aligned}
\end{equation}

To compute the effective longitudinal  diffusivity, we need to compute the Aris moments $T_{0}, T_{1},  \bar{T}_2 $ in turn.
When $n=0$, the equation \eqref{eq:ArisMomentDef} becomes:
\begin{equation}
\frac{\partial T_0}{\partial t}- \frac{\partial^{2} T_{0}}{\partial y^2}= 0,\quad T_0(y,0)= 1, \quad \left. \frac{\partial T_{0}}{\partial y} \right|_{ y=0,1}= 0.
\end{equation}
The solution is $T_0=1$. When $n=1$, the equation \eqref{eq:ArisMomentDef} is:
\begin{equation}\label{eq:aris1}
\frac{\partial T_1}{\partial t}- \frac{\partial^{2} T_{1}}{\partial y^2}= \mathrm{Pe} u (y) \xi(t) T_0,\quad T_1(y,0)= 0, \quad \left. \frac{\partial T_{0}}{\partial y} \right|_{ y=0,1}= 0.
\end{equation}

The eigenfunction and eigenvalue of the Laplace operator on the cross section is $\lambda_{0}=0, \phi_{0}=1$, $\lambda_{n}=n^{2}\pi^{2}, \phi_{n}=\sqrt{2} \cos n \pi y, n\geq 1$ as the orthogonal basis. To compute $\bar{T}_2-\bar{T}_1^2$ with the flow $u (y)\xi (t)$ is equivalent to compute $\bar{T}_{2}$ with the flow $\left( u (y)-\int\limits_{0}^{1}u(y)\mathrm{d}y \right)\xi (t)$. Hence we will neglect the zero frequency mode in the expansion of $u(y)$. We assume the following expansion of $T_1$ and $u (y)\xi (t)$,
\begin{equation}
\begin{aligned}
  v(y,\xi(t)) =  \sum\limits_{n=1}^{\infty} \left\langle u,\phi_{n} \right\rangle \xi (t) \phi_{n},\quad   T_1 (y,t) =  \sum\limits_{n=1}^{\infty}  a_{n}(t) \phi_{n} \\
\end{aligned}
\end{equation}
$a_{i}(0)=0$ follows the initial condition $T_1(y,0)=0$. Substituting those expansions into the equation of \eqref{eq:aris1}, we obtain the equation of $a_n$ 
\begin{equation}
\begin{aligned}
 a'_{n}(t) +  \lambda_{n} a_{n}(t) &= \mathrm{Pe} \left\langle u,\phi_{n} \right\rangle \xi(t) \\
\end{aligned}
\end{equation}
The solution is 
\begin{equation}
\begin{aligned}
a_{n}&= \mathrm{Pe} \left\langle u, \phi_{n}\right\rangle  e^{-\lambda_n  t}\int _0^te^{\lambda_{n} s} \xi(s)ds\\
\end{aligned}
\end{equation}
$\bar{T}_{2}$ satisfies the equation
\begin{equation}
\begin{aligned}
\frac{\mathrm{d} \bar{T}_{2} }{\mathrm{d} t}&=  2 \bar{T}_{0}+  \mathrm{Pe} \xi (t)\overline{ u (y) T_1} \\
\end{aligned}
\end{equation}
With the initial condition $\bar{T}_{2}(0)= 0$, we have 
\begin{equation}
\begin{aligned}
\bar{T}_2(t)&= 2t +  2\mathrm{Pe}^{2} \sum\limits_{n=1}^{\infty} \left\langle u,\phi_{n} \right\rangle^{2} \int\limits_{0}^{t} e^{-\lambda_{n} s}\xi (s)\int\limits _0^se^{\lambda_{n} \tau} \xi(\tau)\mathrm{d} \tau \mathrm{d} s
\end{aligned}
\end{equation}
and
\begin{equation}
\begin{aligned}
\kappa_{\mathrm{eff}}&= 1 + \lim\limits_{t\rightarrow \infty} \frac{\mathrm{Pe}^{2}}{t} \sum\limits_{n=1}^{\infty} \left\langle u,\phi_{n} \right\rangle^{2} \int\limits_{0}^{t} e^{-\lambda_{n} s}\xi (s)\int\limits _0^se^{\lambda_{n} \tau} \xi(\tau) \mathrm{d} \tau \mathrm{d} s\\
\end{aligned}
\end{equation}

\bibliographystyle{elsarticle-harv}


\end{document}